\tikzset{snake it/.style={decorate, decoration=snake}}
\def\@adminfootnotes{%
  \let\@makefnmark\relax  \let\@thefnmark\relax
  \ifx\@empty\@date\else \@footnotetext{\@setdate}\fi%%   <------ added
  \ifx\@empty\@subjclass\else \@footnotetext{\@setsubjclass}\fi
  \ifx\@empty\@keywords\else \@footnotetext{\@setkeywords}\fi
  \ifx\@empty\thankses\else \@footnotetext{%
    \def\par{\let\par\@par}\@setthanks}%
  \fi
}
\theoremstyle{definition}
\newtheorem{theorem}{Theorem}[section]
\newtheorem{remark}[theorem]{Remark}
\newtheorem{prop}[theorem]{Proposition}
\newtheorem{corollary}[theorem]{Corollary}
\newtheorem{example}[theorem]{Example}
\let\oldtocsection=\tocsection
\let\oldtocsubsection=\tocsubsection
\renewcommand{\tocsection}[2]{\hspace{0em}\oldtocsection{#1}{#2}}
\renewcommand{\tocsubsection}[2]{\hspace{1em}\oldtocsubsection{#1}{#2}}
\title[One-dimensional topological theories with defects]{One-dimensional topological theories with defects: the linear case}
\author{Mee Seong Im}
 \address{Department of Mathematics, United States Naval Academy, Annapolis, MD 21402, USA}
 \email{\href{mailto:meeseongim@gmail.com}{meeseongim@gmail.com}}
\author{Mikhail Khovanov} 
 \address{Department of Mathematics, Columbia University, New York, NY 10027, USA}
 \email{\href{mailto:khovanov@math.columbia.edu}{khovanov@math.columbia.edu}}
\date{April 4, 2023}
\subjclass[2020]{Primary: 
18M05, %(symmetric) monoidal categories 
18M30, %(string diagrams and graphical calculi) 
57K16, %(TQFT) 
16W60, %(formal power series) 
15A63. %(bilinear forms)
}
\providecommand{\keywords}[1]{\textbf{\textit{Key words and phrases.}} #1}
\keywords{ Rational noncommutative power series,  topological theory, universal construction,  topological quantum field theory (TQFT), defects in TQFT, Brauer categories, symmetric Frobenius algebras.}
\begin{document}

\def\gen{\mathsf{generators}}
\def\im{\mathsf{im}}
\def\init{\mathsf{in}}
\def\t{\mathsf{t}}
\def\out{\mathsf{out}}
\def\I{\mathsf I}
\def\R{\mathbb R}
\def\Q{\mathbb Q}
\def\Z{\mathbb Z}
\def\mc{\mathcal{c}}
\def\Kar{\mathsf{Kar}}

\def\mcI{\mathcal{I}}
\def\mmcI{\mathcal{I}} %\def\mmcI{\mathcal{I}_{+-}}
\def\mcK{\mathcal{K}}
\def\mmcK{\mathcal{K}} %\def\mmcK{\mathcal{K}_{+-}} 
\def\mcF{\mathcal{F}} % functor 

\def\mcU{\mathcal{U}}
\def\N{\mathbb N} 
\def\C{\mathbb C}
\def\S{\mathbb S}
\def\SS{\mathbb S} 
\def\CP{\mathbb P}
\def\Ob{\mathsf{Ob}}
\def\new{\mathsf{new}}
\def\old{\mathsf{old}}
\def\op{\mathsf{op}}
\def\rat{\mathsf{rat}}
\def\rec{\mathsf{rec}}
\def\coev{\mathsf{coev}}
\def\ev{\mathsf{ev}}
\def\id{\mathsf{id}}
\def\Notation{\textsf{Notation}}
\def\circleft{\raisebox{-.18ex}{\scalebox{1}[2.25]{\rotatebox[origin=c]{180}{$\curvearrowright$}}}}
\renewcommand\SS{\ensuremath{\mathbb{S}}}
\newcommand{\kllS}{\kk\llangle  S \rrangle} %% power ser
\newcommand{\kllSS}[1]{\kk\llangle  #1 \rrangle}
\newcommand{\klS}{\kk\langle S\rangle}  % nc polynomials
\newcommand{\aver}{\mathsf{av}}  % average 
\newcommand{\ophana}{\overline{\phantom{a}}}
\newcommand{\Bool}{\mathbb{B}}
\newcommand{\dmod}{\mathsf{-mod}}
\newcommand{\pfmod}{\mathsf{-pfmod}}
\newcommand{\primitive}{\mathsf{irr}}
\newcommand{\Bmod}{\Bool\mathsf{-mod}}  % B-module 
\newcommand{\Bmodo}[1]{\Bool_{#1}\mathsf{-mod}}  
\newcommand{\Bfmod}{\Bool\mathsf{-fmod}} % finite B-modules 
\newcommand{\Bfpmod}{\Bool\mathsf{-fpmod}} % finite projective B-modules
\newcommand{\Bfsmod}{\Bool\mathsf{-}\underline{\mathsf{fmod}}}  % stable category 
\newcommand{\undvar}{\underline{\varepsilon}} %sequence of varepsilons, not using anymore
\newcommand{\undDC}{\underline{\mathcal{DC}}}
\newcommand{\KarC}{\mathsf{Kar}(\mcC)}  % Karoubi envelope
\newcommand{\KarCa}{\mathsf{Kar}(\mcC_{\alpha})} 
\newcommand{\undotimes}{\underline{\otimes}}
\newcommand{\sigmaacirc}{\Sigma^{\ast}_{\circ}} % equiv classes of words under rotation 
\newcommand{\cl}{\mathsf{cl}}
\newcommand{\PP}{\mathcal{P}} % powerset 
\newcommand{\wedgezero}{\{ \vee ,0\} } % semilattices 
\newcommand{\whA}{\widehat{A}}
\newcommand{\whC}{\widehat{C}}
\newcommand{\whM}{\widehat{M}}
\newcommand{\Sigmalr}{\Sigma^{\Z}}
\newcommand{\Sigmal}{\Sigma^{-}}
\newcommand{\Sigmar}{\Sigma^{+}}
\newcommand{\Sigmaa}{\Sigma^{\ast}}
\newcommand{\SigmaZ}{\Sigma^{\Z}}  
\newcommand{\Sigmac}{\Sigma^{\circ}}
\newcommand{\Span}{\mathsf{Span}}
\newcommand{\oneb}{\mathbf{1}}
\newcommand{\wmcC}{\widetilde{\mcC}}
\newcommand{\wmcCa}{\wmcC_{\alpha}}
\newcommand{\omcC}{\overline{\mcC}}

\newcommand{\alphalr}{\alpha_{\leftrightarrow}}
\newcommand{\alphaZ}{\alpha_{\Z}}
\newcommand{\mcCinfalpha}{\mcC^{\infty}_{\alpha}}
\newcommand{\mcCp}{\mathcal{C}'}
\newcommand{\kkvect}{\kk\mathsf{-vect}}
\newcommand{\Mat}{\mathsf{Mat}}

% redefine emptyset symbol 
\let\oldemptyset\emptyset
\let\emptyset\varnothing

\newcommand{\undempty}{\underline{\emptyset}}
\def\basis{\mathsf{basis}}
\def\irr{\mathsf{irr}} % recognizable series 
\def\spanning{\mathsf{spanning}}
\def\elmt{\mathsf{elmt}}

\def\l{\lbrace}
\def\r{\rbrace}
\def\o{\otimes}
\def\lra{\longrightarrow}
\def\hooklra{\raisebox{.2ex}{$\subset$}\!\!\!\raisebox{-0.21ex}{$\longrightarrow$}}
\def\circleft{\raisebox{-.18ex}{\scalebox{1}[2.25]{\rotatebox[origin=c]{180}{$\curvearrowright$}}}}
\def\Ext{\mathsf{Ext}}
\def\mf{\mathfrak} 
\def\mcC{\mathcal{C}}
\def\mcO{\mathcal{O}}
\def\Fr{\mathsf{Fr}}

\def\ovb{\overline{b}}
\def\tr{{\sf tr}} 
\def\det{{\sf det }} 
\def\tral{\tr_{\alpha}}
\def\one{\mathbf{1}}   % unit  object of category 

\def\lra{\longrightarrow}
\def\twoheadlra{\longrightarrow\hspace{-4.6mm}\longrightarrow}
\def\hooklra{\raisebox{.2ex}{$\subset$}\!\!\!\raisebox{-0.21ex}{$\longrightarrow$}}
\def\kk{\mathbf{k}}  %% base field  
\def\gdim{\mathsf{gdim}}  %% graded dimension 
\def\rk{\mathsf{rk}}
\def\undep{\underline{\epsilon}}
\def\mathM{\mathbf{M}}  % Boolean matrix 

% cobordism categories 
\def\CCC{\mathcal{C}} % cat of cobordisms 
\def\wCCC{\widehat{\CCC}}  % completed category

\def\complement{\mathsf{comp}}
\def\Rec{\mathsf{Rec}} % recognizable series  

\def\Cob{\mathsf{Cob}} 
\def\TCob{\mathsf{TCob}}
\def\TCobt{\TCob_2}  % decide about notation here
\def\OCCobt{\mathsf{OCCob_2}} %open-closed cobordisms
\def\Kar{\mathsf{Kar}}   % Karoubi envelope 

\def\dmod{\mathsf{-mod}}   % modules  
\def\pmod{\mathsf{-pmod}}    % projective modules 

\newcommand{\alphai}{\alpha_{\I}}  % alpha vertical
\newcommand{\alphac}{\alpha_{\circ}}  % alpha circle 
\newcommand{\alphap}{(\alphai,\alphac)} % alpha pair 

\newcommand{\brak}[1]{\ensuremath{\left\langle #1\right\rangle}}
\newcommand{\oplusop}[1]{{\mathop{\oplus}\limits_{#1}}}
\newcommand{\ang}[1]{\langle #1 \rangle } 
\newcommand{\ppartial}[1]{\frac{\partial}{\partial #1}} %partial derivative 

\newcommand{\mcA}{{\mathcal A}}
\newcommand{\cZ}{{\mathcal Z}}
\newcommand{\sq}{$\square$}
\newcommand{\bi}{\bar \imath}
\newcommand{\bj}{\bar \jmath}

\newcommand{\undn}{\mathbf{n}}
\newcommand{\undm}{\mathbf{m}}
\newcommand{\cob}{\mathsf{cob}} % cobordism 
\newcommand{\comp}{\mathsf{comp}} % complementary

\newcommand{\Aut}{\mathsf{Aut}}
\newcommand{\Hom}{\mathsf{Hom}}
\newcommand{\Ind}{\mbox{Ind}}
\newcommand{\Id}{\textsf{Id}}
\newcommand{\End}{\mathsf{End}}
\newcommand{\iHom}{\underline{\mathsf{Hom}}}
\newcommand{\Bools}{\Bool^{\mathfrak{s}}}
\newcommand{\mfs}{\mathfrak{s}}

\newcommand{\drawing}[1]{
\begin{center}{\psfig{figure=fig/#1}}\end{center}}

\def\endomCempt{\End_{\mcC}(\emptyset_{n-1})}

\def\endomCempt{\End_{\mcC}(\emptyset_{n-1})}

\def\MS#1{{\color{blue}[MS: #1]}}
\def\MK#1{{\color{red}[MK: #1]}}
\def\bfred#1{{\bf \color{red}{#1}}}\begin{abstract} The paper studies the Karoubi envelope of a one-dimensional topological theory with defects and inner endpoints, defined over a field. It turns out that the Karoubi envelope is determined by a symmetric Frobenius algebra $\mcK$ associated to the theory. The Karoubi envelope is then equivalent to the quotient of the Frobenius--Brauer category of $\mcK$ modulo the ideal of  negligible morphisms. Symmetric Frobenius algebras, such as $\mcK$, describe two-dimensional TQFTs for the category of thin flat surfaces, and elements of the algebra can be turned into defects on the side boundaries of these surfaces. We also explain how to couple $\mcK$ to the universal construction restricted to closed surfaces to define a topological theory of open-closed two-dimensional cobordisms which is usually not an open-closed 2D TQFT.  
\end{abstract}

\maketitle
\tableofcontents

%%%%%%%%%%%%%%%%%%%%%%%
%
%  Intro  
%
%%%%%%%%%%%%%%%%%%%%%%%

\section{Introduction}
\label{section:intro}
Universal construction~\cite{BHMV,Kh3} starts with an evaluation function for closed $n$-manifolds to produce state space for closed $(n-1)$-manifolds and maps between these spaces associated to $n$-cobordisms. This results in a functor from the category of $n$-dimensional cobordisms to the category of vector spaces (if the evaluation function takes values in a field) which usually fails to be a TQFT, with the tensor product of state spaces for two $(n-1)$-manifolds $N_1,N_2$ properly embedded into the state space for their union:
\begin{equation}
     A(N_1)\otimes A(N_2) \ \hooklra \ A(N_1\sqcup N_2). 
\end{equation}
Universal construction for foams in $\R^3$ in place of $n$-cobordisms is used as an intermediate step in constructing link homology theories~\cite{Kh1,MV,RW1}, see also a review in~\cite{KK} and papers~\cite{Kh2,Me} for other uses and references for the universal construction. 

The universal construction turns out to be interesting already in low dimensions, including in dimensions two~\cite{Kh2,KS2,KKO,KQR} and  one~\cite{Kh3,IK-automata,IZ,IKV23,GIKKL23}. In the latter case, one needs to add zero-dimensional defects ($0$-submanifolds) with labels in a set $\Sigma$. An oriented interval with a collection of $\Sigma$-labelled defects encodes a word $\omega$, that is, an element of the free monoid $\Sigma^{\ast}$ on the set $\Sigma$. An oriented circle with labels in $\Sigma$ encodes a word up to cyclic equivalence. Given an evaluation of each word and a separate evaluation of words up to  cyclic equivalence, there is an associated rigid linear monoidal category, defined in studied in~\cite{Kh3}. It is straightforward to see~\cite{Kh3} that the hom spaces in the resulting categories are finite-dimensional if and only if the evaluations are given by \emph{rational} noncommutative power series~\cite{BR1,RRV}. 

\vspace{0.1in} 

In the present paper we study this category for a rational evaluation $\alpha$. The Karoubi closure of the resulting category can be reduced to the Karoubi closure of a category built from a symmetric Frobenius algebra $\mcK$ that can be extracted from $\alpha$, as explained in Section~\ref{subsec_karoubi_env}.  Sections~\ref{subsection_oned_defectTQFT}-\ref{subsec_top_theory} are devoted to the setup, basic theory and various examples. In Section~\ref{section_2d} we review thin flat surface 2D TQFTs associated to symmetric Frobenius algebras and explain how to enhance these TQFTs by $0$-dimensional defects floating along the boundary that carry elements of the algebra. Throughout the paper we run comparisons between one-dimensional theories with defects and two-dimensional theories without defects and discuss nonsemisimple versus semisimple TQFTs in two dimensions. 

\vspace{0.1in} 

The Boolean analogues of these categories and their relation to automata and regular languages are investigated in~\cite{IK-automata}, where the absence of linear structure creates additional complexities. 

\vspace{0.1in} 

{\bf Acknowledgments.}
We would like to thank Aaron Lauda and  Vladimir Retakh for illuminating discussions. M.S.I. was partially supported by Naval Academy Research Council (Jr. NARC) Fellowship over the summer.  
M.K. gratefully acknowledges partial support via NSF grants DMS-1807425, DMS-2204033 and Simons Collaboration Award 994328.

%%%%%%%%%%%%%%
% ENCODING LINEAR 
%%%%%%%%%%%%%%

\section{One-dimensional topological theories with defects over a field} 
\label{section:encoding-1-dim-top-theory}

We fix a ground field $\kk$. 

%%%%%%%%%%%%%%%%%%
% One-D Defect TQFT  
%%%%%%%%%%%%%%%%%%

\subsection{A one-dimensional defect TQFT from a noncommutative power series}
\label{subsection_oned_defectTQFT}
Start with a finite set (or alphabet) $\Sigma$. Let $\Sigma^{\ast}$ be the set of finite words in letters of $\Sigma$ (elements of $\Sigma$), and $\Sigma^{\ast}_{\circ}$ be the set of circular words, \textit{i.e.}, elements of $\Sigma^{\ast}$ up to the equivalence relation $\omega_1\omega_2\sim\omega_2\omega_1$. The empty word $\emptyset$ is included in both $\Sigma^{\ast}$ and $\Sigma^{\ast}_{\circ}$. 
Suppose we are given an evaluation 
\begin{equation}\label{eq_eva}
    \alpha=(\alphai,\alphac),
\end{equation} 
where 
\begin{equation}\label{eq_evb}
    \alphai:\Sigma^{\ast}\lra \kk, \ \ \  \alphac:\Sigma^{\ast}_{\circ}\lra \kk
\end{equation} 
are two functions on words and circular words in $\Sigma$, respectively, with values in a field $\kk$. To $\alpha$, following~\cite{Kh3} (also see earlier work~\cite{KS2,KKO} for a similar framework), there is assigned a symmetric $\kk$-linear monoidal category $\mcC_{\alpha}$. Its objects are finite sign sequences $\varepsilon$, thought of as oriented $0$-manifolds, and morphisms are $\kk$-linear combinations of oriented 1-cobordisms with 0-dimensional defects, the latter decorated by elements of $\Sigma$. See \ref{s2.001}. One-cobordisms can have ``inner'' boundary points, in addition to ``outer'' boundary points that define the objects for the morphism. Forming the composition of two such cobordisms may result in components without any ``outer'' boundary points, called \emph{floating} intervals and circles. See \ref{s2.002}. These floating components are evaluated via the interval and circle evaluation functions $\alphai$ and $\alphac$, respectively, and the composition is then reduced to a diagram without floating connected components.

\input{s2.001}

\input{s2.002}

Two linear combinations of such morphisms between sequences $\varepsilon$ and $\varepsilon'$ are equal if any closures of these two linear combinations evaluate to the same element of $\kk$ via $\alpha$, see~\cite{KS2,Kh3}. 

The state space $A_{\alpha}(\varepsilon):=\Hom_{\mcC_{\alpha}}(\emptyset_0,\varepsilon)$ of a sequence $\varepsilon$ is defined as the space of homs from the empty sign sequence $\emptyset_0$ to $\varepsilon$ in $\mcC_{\alpha}$ (we use different notations $\emptyset\in \kk\Sigma^{\ast}$ for the empty word and $\emptyset_0$ for the empty oriented $0$-manifold and the corresponding object of $\mcC_{\alpha}$). 

\vspace{0.1in} 

We say that $\alpha$ is \emph{rational} if the following equivalent conditions hold: 
\begin{itemize}
    \item State spaces $A_{\alpha}(\varepsilon)$ are finite-dimensional for all $\varepsilon$.
    \item  Hom spaces in $\mcC_{\alpha}$ are finite-dimensional.
    \item Spaces $A(+)$ and $A(+-)$ are finite-dimensional. 
    \item $A(+-)$ is finite-dimensional. 
\end{itemize}
Each of these conditions is equivalent to both of the noncommutative power series $\alphai,\alphac$ in \eqref{eq_eva}, \eqref{eq_evb} being rational in the sense of~\cite{BR1} (having a finite-dimensional state space, equivalently, a finite-dimensional syntactic algebra, see also~\cite{Kh3}).  

Category $\mcC_{\alpha}$ is $\kk$-linear and preadditive.
For rational $\alpha$, it is convenient to consider the Karoubi closure $\KarCa$  of $\mcC_{\alpha}$ (also denoted $\undDC_{\alpha}$ in~\cite{KS2,Kh3} in this and related cases)  given by forming finite direct sums of objects and then adding objects for idempotent endomorphisms of these direct sums. The category $\KarCa$ is $\kk$-linear, additive, idempotent-closed, with finite-dimensional hom spaces over $\kk$. There is a fully-faithful functor \begin{equation}
    \mcC_{\alpha}\lra \KarCa. 
\end{equation}

%\vspace{0.1in}

Assume from now on that evaluation $\alpha$ is rational. 
$\kk$-vector space $A(+)$ is a left $\kk\Sigma^{\ast}$-module, that is, a module over the ring of noncommutative polynomials in letters in $\Sigma$, equivalently the monoid algebra of the free monoid $\Sigma^{\ast}$. Module $A(+)$ has a distinguished element $|\emptyset\rangle$ corresponding to the diagram with an empty word, and action of $\omega\in \Sigma^{\ast}$ that takes it to $|\omega\rangle$. Element $|\emptyset\rangle$ is a cyclic vector in $A(+)$, so  $A(+)=\kk\Sigma^{\ast}|\emptyset\rangle$; see Figure~\ref{linear-0001}, top row.

\input{linear-0001}

 $A(-)$ is the dual vector space of $A(+)$, also carrying a distinguished vector $\langle \emptyset |$, with a right action of $\kk\Sigma^{\ast}$; see Figure~\ref{linear-0001}, bottom row.

\vspace{0.1in}

The space $A(+)$ comes with the trace map 
\begin{equation}
\tr \ : \ A(+)\lra \kk, \ \ \     \tr(|\omega\rangle) \ = \  \alpha_\I(\omega).
\end{equation}
Diagrammatically, we evaluate an oriented interval with the word $\omega$ written on it using $\alpha_\I$. The trace map is nondegenerate: if $x\in A(+)$ with $x\not=0$, then there exists $\omega\in \Sigma^{\ast}$ such that $\tr(\omega x)\not= 0$. 

The trace map is part of the perfect pairing 
\begin{equation}\label{eq_pair} 
   (\:\:,\:\:) \ :\  A(-) \otimes A(+)  \ \lra  \kk, \mbox{ where }  \langle \omega_1|\otimes |\omega_2 \rangle \ \mapsto \ \alpha_\I(\omega_1\omega_2), 
\end{equation}
given by concatenating words $\omega_1,\omega_2$ written on  ``half-intervals" into the word $\omega_1\omega_2$ on an interval and evaluating it, see Figure~\ref{linear-0002}. 

% \vspace{0.1in} 

\input{linear-0002}

% \vspace{0.1in} 

The pairing makes the left action of $\kk\Sigma^{\ast}$ on $A(+)$ and the right action of $\kk\Sigma^{\ast}$ on $A(-)$ adjoint: 
\[  (x\omega , y ) = (x,\omega y) = \alpha_\I(x\omega y) , \ \  x\in A(-),\ \ y\in A(+), \ \ \omega\in \Sigma^{\ast}. 
\] 

Action of $\Sigma^{\ast}$ on $A(+)$ induces a $\kk$-algebra homomorphism 
\begin{equation}
    \kk \Sigma^{\ast} \stackrel{\phi_{\alpha}}{\lra} \End_{\kk}(A(+))
\end{equation}
of the algebra of noncommutative polynomials into a finite-dimensional matrix algebra. 
Denote by 
\begin{equation} 
B_0 \ := \ \im(\phi_{\alpha})
\end{equation} 
the image of $\kk\Sigma^{\ast}$ in $\End_{\kk}(A(+))$. It is a unital subalgebra of 
the matrix algebra. The algebra $B_0$ is the entire $\End_{\kk}(A(+))$ if and only if the representation $A(+)$ of $\kk\Sigma^{\ast}$ is absolutely irreducible. Note that $A(+),A(-),B_0$ and the actions above depend only on $\alpha_{\I},$ not on $\alpha_{\circ}$. 

\vspace{0.1in} 

Vice versa, suppose we are given a finite-dimensional representation $V$ of $\kk\Sigma^{\ast}$ with a cyclic vector $v^0$  and a nondegenerate trace $\tr:V\lra \kk$. Here $v^0$ corresponds to the undecorated upward-oriented half-interval (half-interval with the empty word $\emptyset$ on it). The trace is nondegenerate in the sense that for any $v\in V, v\not= 0$ there exists a word $\omega$ such that $\tr(\omega v)\not=0$. 

To this data one assigns rational noncommutative series via the  evaluation $\alpha_\I(\omega)=\tr(\omega v^0)$ for $\omega\in\Sigma^{\ast}$ so that $A(+)=V=\kk\Sigma^{\ast}v^0$ and $A(-)=V^{\ast}$, where $v^0$ is a cyclic vector.  This gives a bijection between isomorphism classes of rational evaluations (rational noncommutative power series in $\Sigma$) and nondegenerate triples $(V,v^0,\tr)$ with an action of $\Sigma^{\ast}$. The action of infinite-dimensional algebra $\kk\Sigma^{\ast}$ on $V$ factors through a faithful action of the finite-dimensional algebra $B_0\subset \End_{\kk}(V)$. 

Also, given a finite-dimensional algebra $B_0$ with a set of generators $\Sigma$, its faithful action on a finite-dimensional vector space $V$ with a cyclic vector $v^0$ and a nondegenerate trace form on $V$, this recovers the noncommutative series $\alpha_\I$ via the above recipe, with $A(+)\cong V$.

\vspace{0.1in} 

There is a minimalist way to extend the above structure of $A(+)$ with an action of $\Sigma^{\ast}$, a cyclic vector and a trace to a symmetric monoidal category $\mcCp_{\alphai}$, which turns out to be a TQFT with defects. In this construction evaluation of decorated circles is derived from that for decorated intervals. To define $\mcCp_{\alphai}$, first enhance the graphical calculus for decorated half-intervals by picking a basis $\{v^1,\dots, v^k\}$ of $A(+)$ and the dual basis $\{v_1,\dots, v_k\}$ of $A(-)$, with $k=\dim A(-)=\dim A(+)$. Denote vectors $v^i,v_i$ by placing the label $i$ at the end of the suitably oriented half-interval, see Figure~\ref{linear-0003}, left.  A half-interval decorated by $\omega\in \Sigma^{\ast}$ can be written as a linear combination of the basis vectors, by writing $\langle \omega |$ and $|\omega \rangle$ in these bases, see Figure~\ref{linear-0003}, middle and right. In the special case of undecorated half-intervals, $\omega=\emptyset$, and 
\begin{equation}\label{eq_emptyset}
    \langle \emptyset | \ = \  \sum_{i=1}^k \lambda_i^{\emptyset}v_i , \ \ \ 
    |\emptyset \rangle \ = \  \sum_{i=1}^k
    \lambda_i^{\emptyset} v^i, \ \ \ \lambda_i^{\emptyset}\in \kk.  
\end{equation}

\vspace{0.1in} 
 
\input{linear-0003}

There is then the surgery formula (the dual basis relation), shown in Figure~\ref{linear-0004} left, for cutting any half-interval in the middle, where  any floating intervals that appear are evaluated via $\alphai$. This formula also tells us that, for consistency, a circle carrying word $\omega$ should be evaluated to the trace of $\omega$ acting on $A(+)$ (equivalently, on $A(-)$), 
\begin{equation}\label{eq_trace_omega}
    \alpha_{\circ}(\omega) \ := \ \tr_{A(+)}(\omega) \ = \ \tr_{A(-)}(\omega), 
\end{equation}
 see Figure~\ref{linear-0012}. Choose an interval on the circle, replace it by the sum of $i,i$-colored half-intervals, $1\le i\le k$, and evaluate via $\alphai$. We denote the circular series associated to a rational series $\alphai$ in this way by $\alphai^{\tr}$, so that 
\begin{equation}\label{eq_tr_alpha}
    \alphai^{\tr}(\omega) \ = \ \alpha_{\circ}(\omega) \ = \ \tr_{A(+)}(\omega). 
\end{equation}

Necessarily, $\alphai^{\tr}$ is a rational circular series in the same set $\Sigma$ of variables as $\alphai$. The coefficient of of the empty word $\emptyset$ in this series equals $\dim A(+)$. 

\vspace{0.1in} 

\input{linear-0004}

\input{linear-0012}
 
To a rational series $\alphai$ we assign  a symmetric monoidal category $\mcCp_{\alphai}$ whose objects are finite sign sequences $\varepsilon$. Morphisms from $\varepsilon$ to $\varepsilon'$ are $\kk$-linear combinations of diagrams of decorated half-intervals and outer arcs, see Figure~\ref{linear-0005}, left diagram.  
An arc is \emph{outer} if it has both endpoints on the boundary of the diagram ({\it i.e.}, among the signed points of $\varepsilon$ and $\varepsilon'$). A half-interval has one floating (inner) endpoint and one endpoint on the boundary of the diagram (outer endpoint). 
Each floating endpoint is either decorated by $i\in \{1,\dots, k\}$ or undecorated. Some evaluation rules for floating cobordisms are given in Figures~\ref{linear-0002},~\ref{linear-0004},~\ref{linear-0012}.  

\vspace{0.1in} 

\input{linear-0005}

To summarize these rules, we observe that in the category $\mcCp_{\alphai}$ 
arcs and half-intervals can be decorated by words $\omega\in\Sigma^{\ast}$. 
Half-intervals can be decorated by both a label $i$ at the floating (inner) endpoint and words $\omega$. Floating intervals and circles (these appear upon composition of morphisms) are evaluated using the following rules (see rules in Figure~\ref{linear-0003} on the right, Figures~\ref{linear-0004} and~\ref{linear-0012}):
\begin{itemize}
    \item A floating interval with unlabelled endpoints and decorated by word $\omega$ evaluates to $\alphai(\omega)$.
    Alternatively, an interval with one or two unlabelled endpoints is evaluated by first converting unlabelled endpoints into a linear combination of labelled endpoints, see equation \eqref{eq_emptyset}, or, more generally via   Figure~\ref{linear-0003} by writing $\langle \omega |$ as a linear combination of $v_1,\dots, v_k$ and $|\omega\rangle$ as a linear combination of $v^1,\dots, v^k$.  
    \item An interval with labelled endpoints $i,j$ and  decorated by a word $\omega$ evaluates to $v^i (v_j \omega)$, see Figure~\ref{linear-0004}. 
    \item A circle decorated by $\omega$ evaluates to the trace of $\omega$ on $A(+)$ or $A(-)$, see Figure~\ref{linear-0012}. 
\end{itemize}

With these evaluation rules at hand, we apply the universal construction to build the category $\mcCp_{\alphai}$. The evaluation rule for decorated circles makes decomposition of the identity in Figure~\ref{linear-0004} hold. Consequently, any outer arc reduces to a linear combination of half-intervals, with endpoint decorated by $i\in \{1,\dots, k\}$. Half-intervals with dots are further reduced to linear combinations of endpoint-decorated dotless half-intervals. 

\vspace{0.1in} 

Composing an undecorated outer arc with a possibly decorated half-interval results in a half-interval with the same decoration. Composing two half-intervals results in a floating interval, which is then evaluated via $\alphai$. 
An undecorated outer arc can be written as a linear combination of pairs of half-intervals via Figure~\ref{linear-0004} relations.  In particular, an undecorated circle evaluates to $k=\dim A(+)$, see Figure~\ref{linear-0006} left. 

\input{linear-0006}

\vspace{0.1in} 

The result is a symmetric monoidal $\kk$-linear category $\mcC_{\alphai}'$. 
It has easily describable hom spaces. A basis of $\Hom(\varepsilon,\varepsilon')$ is given by drawing the unique diagram of half-intervals ending at all signs of $\varepsilon$ and $\varepsilon'$ and adding  all possible labels $i\in\{1,\dots, k\}$ to each inner endpoint of the diagram, see an example in Figure~\ref{linear-0006} on the right. In particular, 
\begin{equation}
    \dim_{\kk} ( \Hom(\varepsilon,\varepsilon')) \ = \ k^{|\varepsilon|+|\varepsilon'|},
\end{equation}
where $|\varepsilon|$ is the length of the sequence $\varepsilon$. 
This category is a one-dimensional TQFT with defects, in the sense that the state space of the concatenation of sequences is the tensor product of state spaces for individual sequences: 
\begin{equation}
    A(\varepsilon \varepsilon') \ \cong \  A(\varepsilon) \otimes A(\varepsilon'), \ \ \ A(\varepsilon)\cong A(\epsilon_1)\otimes A(\epsilon_2)\otimes \dots \otimes A(\epsilon_n), 
\end{equation}
where $\varepsilon=\epsilon_1\epsilon_2\cdots \epsilon_n$, $\epsilon_i\in \{+,-\}.$ 

We emphasize that the category $\mcC'_{\alphai}$ depends only on the interval evaluation $\alphai$. Evaluation of decorated circles is computed as the trace of the action of $\Sigma^{\ast}$ on the state space $A(+)$ associated to $\alphai$. 

\vspace{0.1in} 

Denote by $\Kar(\mcC_{\alphai}')$ the Karoubi envelope of $\mcC_{\alphai}'$ given by allowing finite direct sums of objects of $\mcC_{\alphai}'$ and then passing to the idempotent closure. Category 
$\Kar(\mcC_{\alphai}')$ is an additive symmetric monoidal  $\kk$-linear category.

Consider idempotents $e_i \in \End_{\mcC_{\alphai}'}(+)$ given by a pair of $i$-labelled half-intervals, $i=1,\dots, k$, see Figure~\ref{linear-0016}  on the left. These are mutually-orthogonal idempotents giving a decomposition of the identity $\id_+$ endomorphism 
\begin{equation}
    \id_+ = e_1+e_2+\ldots + e_k, \hspace{1cm}  e_i e_j =\delta_{i,j} e_i, 
\end{equation}
see Figure~\ref{linear-0016} on the right. There is a similar decomposition of the identity for the dual object $-$ via idempotents $e_1',\dots, e_k'$, see Figure~\ref{linear-0016}. 

\input{linear-0016}

Note that the dual simple object $(-,e_i')$, see Figure~\ref{linear-0016} in the middle, is isomorphic to $(+,e_i)$, via the pair of morphisms show in that figure on the right. Recall that idempotent endomorphisms $e,e'$ are \emph{equivalent} (and corresponding objects of the Karoubi envelope are isomorphic) if there exist two-way composable morphisms $\beta_1,\beta_2$ such that $e=\beta_2\beta_1$ and $e'=\beta_1\beta_2$. For idempotents $e_i,e_i'$ these two morphisms are written next to the arrows between these idempotents in Figure~\ref{linear-0016} on the right. 

Furthermore, objects $(+,e_i)$ and $(+,e_j)$ are isomorphic, for $i\not=j,$ $1\le i,j\le k$, via the morphisms shown in Figure~\ref{linear-0017}. 

\vspace{0.1in} 

\input{linear-0017}

\vspace{0.1in} 

The endomorphism rings of objects $(+,e_i)$ and $(-,e_i')$ are the ground field $\kk$, and these $2k$ objects, over $i=1,\dots, k$, are pairwise isomorphic. The generating object $+$ is the sum of $k$ of them. 
This quickly leads to the following result that 
 $\Kar(\mcC_{\alphai}')$ 
 generated by $(+,e_i)$, $(-,e_i') $ over $i=1,\dots, k$  is equivalent to the tensor category $\kkvect$ of finite-dimensional $\kk$-vector spaces. 

\begin{prop} The Karoubi envelope of $\mcC_{\alphai}'$ is equivalent, as an additive symmetric monoidal category, to the 
category of finite-dimensional $\kk$-vector spaces: 
\begin{equation}
    \Kar(\mcC_{\alphai}') \ \cong \ \kkvect . 
\end{equation}
\end{prop}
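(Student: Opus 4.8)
The plan is to produce an explicit symmetric monoidal equivalence, given by the state space functor. Consider
\[
F \ := \ \Hom_{\mcCp_{\alphai}}(\emptyset_0,-) \ = \ A(-) \ \colon \ \mcCp_{\alphai} \longrightarrow \kkvect .
\]
It is $\kk$-linear and lands in finite-dimensional vector spaces, since the hom spaces of $\mcCp_{\alphai}$ are finite-dimensional. It is strong symmetric monoidal: the canonical comparison maps $A(\varepsilon)\otimes_{\kk}A(\varepsilon')\to A(\varepsilon\varepsilon')$ (juxtapose two diagrams side by side) are isomorphisms because $\mcCp_{\alphai}$ is a one-dimensional TQFT with defects, the unit map $\kk\to A(\emptyset_0)=\End_{\mcCp_{\alphai}}(\emptyset_0)=\kk$ is an isomorphism, and compatibility with the symmetries is clear from the diagrammatics. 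The first step is to show that $F$ is fully faithful.

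On hom spaces $F$ is the $\kk$-linear map $\Hom_{\mcCp_{\alphai}}(\varepsilon,\varepsilon')\to\Hom_{\kk}(A(\varepsilon),A(\varepsilon'))$, $f\mapsto f\circ(-)$. Both sides have dimension $k^{|\varepsilon|+|\varepsilon'|}$ — the target because $\dim A(\varepsilon)\cdot\dim A(\varepsilon')=k^{|\varepsilon|}\cdot k^{|\varepsilon'|}$ — so it suffices to prove $F$ faithful, whereupon it is a bijection on each hom space and hence full. For faithfulness I would use the explicit basis of $\Hom_{\mcCp_{\alphai}}(\varepsilon,\varepsilon')$: every basis diagram has the same underlying shape, a single half-interval at each of the $|\varepsilon|+|\varepsilon'|$ boundary points, and they are indexed by the labels in $\{1,\dots,k\}$ chosen at the inner endpoints. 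Given $0\neq f=\sum_{\ell}c_\ell D_\ell$, fix $\ell_0$ with $c_{\ell_0}\neq 0$ and let $g\colon\emptyset_0\to\varepsilon$ be the diagram whose half-interval at each strand of $\varepsilon$ carries the same label that $D_{\ell_0}$ carries there. Composing, $D_\ell\circ g$ is a product of Kronecker deltas — one per strand of $\varepsilon$, produced by the floating labelled intervals created in the composition — times the ``$\varepsilon'$-part'' of $D_\ell$, which is one of the basis vectors of $A(\varepsilon')$. Hence $f\circ g$ is a linear combination of the pairwise distinct basis vectors of $A(\varepsilon')$ coming from those $D_\ell$ that agree with $D_{\ell_0}$ along $\varepsilon$, with the $D_{\ell_0}$-term having nonzero coefficient $c_{\ell_0}$. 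Thus $f\circ g\neq 0$, so $F(f)\neq 0$.

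It remains to pass to Karoubi envelopes. The functor $F$ extends along $\mcCp_{\alphai}\hookrightarrow\Kar(\mcCp_{\alphai})$ to a $\kk$-linear symmetric monoidal functor $\overline{F}\colon\Kar(\mcCp_{\alphai})\to\Kar(\kkvect)$, and $\overline{F}$ is again fully faithful because the Karoubi-envelope construction preserves full faithfulness. Since $\kkvect$ is already additive, idempotent complete and closed under finite direct sums, $\Kar(\kkvect)\simeq\kkvect$, so we may view $\overline{F}\colon\Kar(\mcCp_{\alphai})\to\kkvect$. Essential surjectivity is immediate: $\overline{F}(\emptyset_0^{\oplus r})\cong\kk^{r}$ for all $r\ge 0$, and every finite-dimensional $\kk$-vector space is isomorphic to some $\kk^{r}$. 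Hence $\overline{F}$ is an equivalence of additive symmetric monoidal $\kk$-linear categories, which is the claim. As a byproduct this recovers the remarks preceding the statement: every object of $\Kar(\mcCp_{\alphai})$ is isomorphic to a finite direct sum of copies of the single object $\one=(+,e_1)$, which is also isomorphic to $(-,e_1')$ and to $\emptyset_0$. The only non-formal point is the faithfulness of $F$; it becomes short once one combines the common-shape description of the basis of $\Hom_{\mcCp_{\alphai}}(\varepsilon,\varepsilon')$ with the rule that a floating interval with labelled endpoints $i,j$ evaluates to $\delta_{i,j}$, and I expect this to be the main step.
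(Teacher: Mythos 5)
Your proof is correct, and while the functor you construct—the state-space functor $F = A(-) = \Hom_{\mcCp_{\alphai}}(\emptyset_0,-)$—is isomorphic to the one the paper describes (sending $(+,e_i)\mapsto V_i$, $(-,e_i')\mapsto V_i^{\ast}$, $+\mapsto V_1\oplus\cdots\oplus V_k$), the verification is genuinely different. The paper, relying on the discussion just above the proposition, argues object-by-object via the idempotent decomposition: $\id_+ = e_1 + \cdots + e_k$ with $\End_{\Kar(\mcCp_{\alphai})}((+,e_i)) = \kk$, all $2k$ objects $(+,e_i)$ and $(-,e_j')$ pairwise isomorphic and isomorphic to $\oneb$, so the Karoubi envelope is generated by a single object with endomorphism ring $\kk$. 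You instead prove fully-faithfulness in one step: the dimension count $\dim\Hom_{\mcCp_{\alphai}}(\varepsilon,\varepsilon') = k^{|\varepsilon|+|\varepsilon'|} = \dim A(\varepsilon)\cdot\dim A(\varepsilon')$ reduces it to faithfulness, and faithfulness follows by pairing against a test morphism $g:\emptyset_0\to\varepsilon$ that copies the $\varepsilon$-labels of a chosen basis diagram and using the $\delta_{i,j}$-evaluation of the resulting floating intervals. Your argument is more self-contained and makes explicit the faithfulness that the paper only asserts implicitly; the paper's route has the advantage that the idempotent-decomposition technology transfers directly to the later, harder case of $\mcC_{\alpha}$ (where $A(+-)\ne A(+)\otimes A(-)$ in general, so the naive dimension count no longer closes the argument). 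Both are valid.
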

\begin{proof} The equivalence is given by the functor that takes each $(+,e_i)$ to a one-dimensional vector space $V_i$, each $(-,e_i')$ to its dual $V_i^{\ast}$ and takes $+$ to the $k$-dimensional space $V=V_1\oplus \ldots\oplus V_k$. 
\end{proof} 

The proposition tells us that $\Kar(\mcC_{\alphai}')$ has a very simple structure. The complexity of noncommutative rational power series $\alphai$ is hidden in the action of $\Sigma^{\ast}$ on $A(+)\cong V$. More generally, given a TQFT taking values in  $\kk$-vector spaces, the target category of that TQFT is $\kkvect$ or some variation of it, so a version of the above proposition holds as well. 

\vspace{0.1in} 

The interesting question here is to \emph{explicitly} compute the circular evaluation $\alphai^{\tr}$ given a rational interval evaluation $\alphai$ or, equivalently, rational noncommutative power series $\alphai$. We obtain $\alphai^{\tr}$ by considering $A(+)$ and the action of $\kk\Sigma^{\ast}$ on it, so that the coefficient at $\omega$ of the noncommutative circular series of $\alphai^{\tr}$ is the trace of $\omega$ on $A(+)$, see \eqref{eq_trace_omega}, and the generating function of the circular evaluation is 
\begin{equation}\label{eq_circ_eval_gen}
    Z_{\alphai}^{\tr} \ := \ \sum_{\omega \in \Sigma^{\ast}} \  \tr_{A(+)}(\omega) \, \omega .
\end{equation}
For more than one variable, $Z_{\alpha_\I}$ is a noncommutative power series in elements of $\Sigma$. 
Circular evaluation $\alphai^{\tr}$ gives a canonical extension of $\alphai$ to a TQFT with defects. This extension is unique, in appropriate sense. It is straightforward to write down in the $1$-variable case, see Section~\ref{subsection:one-var-compare-two}.

\vspace{0.1in} 

Given a $\kk[\Sigma^{\ast}]$-module $M$, finite-dimensional over $\kk$, its characteristic function $\chi(M)$, defined in~\cite{RRV}, is given by 
\begin{equation}
    \chi(M) \ := \ \sum_{\omega\in \Sigma^{\ast}} \tr_M(\omega) \omega. 
\end{equation}
Expression \eqref{eq_circ_eval_gen} is the characteristic function of $\kk\Sigma^{\ast}$-module $A(+)$.

%%%%%%%%%%%%%%%%%%%
% 1D case and comparison 
%%%%%%%%%%%%%%%%%%%

\subsection{One-variable case and comparison to two-dimensional theory}
\label{subsection:one-var-compare-two}

Consider the above construction in the case of a single variable, $\Sigma=\{a\}$. Then there is only one type of a dot, necessarily labelled $a$, and $n$ dots on an interval can be denoted by a single dot labelled $n$. The interval evaluation in encoded by a one-variable power series 
\begin{equation} \label{eq_one_var} 
    Z_{\I}(T) \ := \ \sum_{n\ge 0} \alpha_{\I,n}T^n .
\end{equation}
A well-known theorem, see~\cite[Proposition 2.1]{Kh3} and \cite[Theorem 2.3]{Kh2}, says that $Z_{\I}(T)$ is a rational series ($A(+)$ is finite-dimensional) if and only if it is a rational function, 
\begin{equation} \label{eq_one_var_2} 
    Z_{\I}(T) \ = \ \frac{P(T)}{Q(T)},
\end{equation}
for some polynomials $P(T),Q(T)$, with $Q(0)\not= 0$. 

\vspace{0.1in} 

We would like to explicitly compute $\alphai^{\tr}$ in this case, given the generating function above (equivalently, given interval evaluation $\alphai$). 

The state space $A(+)$ can be identified with the state space $A(1)$ in~\cite[Section 2]{Kh2} of a circle in the 2D topological theory~\cite{Kh2} associated to the same generating function $Z_{\I}(T)$. In that 2D topological theory, closed connected oriented surface of genus $n$ evaluates to $\alpha_{\I,n}\in \kk$, while in our 1D defect topological theory an interval with $n$ dots evaluates to $\alpha_{\I,n}$, see the correspondence in Figure~\ref{linear-0015} right. 

The reason is that there is a functor from the category of (oriented) dotted one-cobordisms to the category of (oriented) two-cobordisms. This functor sends $+$ (and $-$) to an oriented circle. It sends a half-interval to a disk, a  dotless arc to an annulus, and an outer arc with a single dot to a two-holed torus, see Figures~\ref{linear-0013} and \ref{linear-0014}. 

\vspace{0.1in} 

\input{linear-0013}

\input{linear-0014}

There is a bijection between homeomorphism classes of decorated connected dotted 1-manifolds with boundary $+$ and connected oriented surfaces with boundary $\SS^1$, see Figure~\ref{linear-0015} left, where $n$-dotted half-interval corresponds to a genus $n$ surface with one boundary circle. 

\input{linear-0015}

\vspace{0.1in} 

This leads to a natural isomorphism of state spaces 
\begin{equation}\label{eq_state_iso} 
A(+)\ \cong \ A(\SS^1)
\end{equation} 
of the 1D topological theory with defects with a rational generating function in \eqref{eq_one_var} for interval evaluation and the 2D topological theory with the same generating function. Notice also a canonical isomorphism $A(+)\cong A(-)$ sending upward-oriented half-interval with $n$ dots to the downward-oriented half-interval with $n$ dots, $n\ge 0$ (this isomorphism exists when $|\Sigma|=1$ and otherwise requires $\alphai$ to be invariant under word reversal). 

\vspace{0.1in}

\input{linear-0018}

One can look to compare the state spaces for these two theories (in two different dimensions) beyond a single point and a circle. In the above thickening construction, a circle with $n$ dots corresponds to a genus $n+1$ closed surface, see Figure~\ref{linear-0018}, so for the best match we pick the circular series in the 1D theory to be 
\begin{equation}\label{eq_tZ}
   \ Z_{\alphac}(T) \ = \ T Z_{\alphai}(T),  \ \ \  \alpha_{\circ,n+1} = \alpha_{\I,n},  \  n\ge 0 . 
\end{equation}
Then there is a natural $\kk$-linear map 
\begin{equation}
    A(+-) \ \stackrel{\psi_{+-}}{\lra} \ A(2) 
\end{equation}
from the state space of $+-$ in the 1D theory to that of two circles in the 2D theory, with the generating functions \eqref{eq_one_var}, \eqref{eq_tZ} for the 1D theory and \eqref{eq_one_var} for the 2D theory, given by the above thickening of dotted 1D cobordisms to 2D cobordisms. This map respects evaluations and is, in fact, an isomorphism, so that $A(+-)\cong A(2)$ in the two theories. 

More generally, for any sign sequence $\varepsilon$ there is natural map 
\begin{equation}
    A(\varepsilon) \ \stackrel{\psi_{\varepsilon}}{\lra} \ A(|\varepsilon|) 
\end{equation}
extending to a monoidal functor $\psi$ between corresponding categories for 1D and 2D evaluations. Map $\psi_{\varepsilon}$ is not surjective, for instance, for $|\varepsilon|=3$ and the constant evaluation function $Z_{\I}(T)=\beta\in \kk$, $\beta\not= 0$. Element of $A(3)$ which is the  3-holed sphere is not in the image of $\psi_{\varepsilon}$, for any length 3 sign sequence $\varepsilon$. Also, $\psi_{++}$ is not, in general, surjective, with the annulus element of $A(2)$ not in its image.   

\vspace{0.1in} 

We now come back to the problem of computing $\alphai^{\tr}$ given $\alphai$, for one-element $\Sigma=\{a\}$. Let us first examine 
two special cases. 
\begin{itemize}
    \item The generating function  $Z_{\I}(T)=\sum_{i=0}^k a_i T^i$ is a polynomial of degree $k$. Then $\dim A(+)=k+1$ and the operator of multiplication by $a$ is nilpotent. Consequently, circular evaluation $\alphac$ is the constant function, taking value $0$ on any nonzero power of $a$, with the generating function $Z_{\circ}(T)=k+1$. 
    \item The generating function is a reduced fraction of the 
    form 
    \begin{equation}\label{eq_red_frac_1}
        Z_{\I}(T) = \frac{f(T)}{(\lambda-T)^k}, \ \  k\ge 1,\ \  \lambda\not= 0, \ \ \deg(f(T))<k. 
    \end{equation}
    Then $A(+)$ is a cyclic $\kk[a]$-module isomorphic to $\kk[a]/((\lambda a -1)^k)$,  where we quotient by the reciprocal polynomial of $(\lambda-a)^k$, see~\cite{Kh3}. 
    Note that trace of $a^m$ on this quotient space does not depend on $f(T)$ above, subject to the conditions in \eqref{eq_red_frac_1}. 
     Substituting $u=\lambda a -1$, so that $a=\frac{1}{\lambda}(u+1)$, the trace of $a^m$ on $\kk[u]/(u^k)$ is given by  
    \begin{equation}  \tr(a^m) \ = \ \lambda^{-m} k .
    \end{equation}
 \end{itemize}
 
    A rational function, over an algebraically closed field $\kk$, has a unique partial fraction decomposition
    \begin{equation}\label{eq_dec_part}
       Z_\I(T)= \frac{P(T)}{Q(T)} \ = \ \sum_{i=1}^r \frac{f_i(T)}{(\lambda_i-T)^{k_i}} + f_0(T), \ \ \  \lambda_i\not=0,\  \deg(f_i(T))<k_i, \ i=1,\dots, r. 
    \end{equation}
    Then the trace circular series associated with this generating function is 
    \begin{eqnarray} \label{eq_Z_trace}
        Z^{\tr}_{\alphai}(T)   & = &  \deg(f_0)+1 + \sum_{m\ge 0} \left(\sum_{i=1}^r k_i \lambda_i^{-m}  T^m \right)  \\
        \nonumber
        & = & \deg(f_0)+1 +  \sum_{i=1}^r k_i \sum_{m\ge 0}(\lambda_i^{-1} T)^m  \\
        \nonumber
        & = & \deg(f_0)+1 +  \sum_{i=1}^r \frac{k_i}{1-\lambda_i^{-1} T}
    \end{eqnarray}
If $Z_{\I}(T)$ in \eqref{eq_dec_part} is a proper fraction, that is, $f_0(T)=0$, we set $\deg(f_0)+1=0$ in \eqref{eq_Z_trace}.
Note that the characteristic polynomial for the trace series is a divisor of the characteristic polynomial for the original series.  
    
    If eigenvalues of $a$ on $A(+)$ are $\mu_1,\dots, \mu_n$, listed with multiplicities, then 
    \begin{equation}\label{eq_Z_mu} 
        Z^{\tr}_{\alphai}(T) \ = \ \sum_{i=1}^n \frac{1}{1-\mu_i T},
    \end{equation}
    which is~\cite[Example 2.6]{RRV}. 
    
    It is an interesting problem to explicitly write down noncommutative trace series $Z_{\alphai}^{\tr}(\Sigma)$ associated with an arbitrary rational noncommutative series $Z_{\alphai}(\Sigma)$ when the number of variables $|\Sigma|$ is greater than one.

%%%%%%%%%%%%%%%%%
% A topological theory 
%%%%%%%%%%%%%%%%%

\subsection{A topological theory when a circular series is added}
\label{subsec_top_theory}

To build a more general monoidal category, we additionally pick a circular rational series $\alpha_{\circ}$, see~\cite{Kh3,IK-automata}. Here $\alpha=(\alpha_\I,\alpha_{\circ})$ is a pair: a rational noncommutative series $\alpha_\I$ and a circular rational noncommutative series $\alpha_{\circ}$. We build category $\mcC_{\alpha}$ from it as in Section~\ref{subsection_oned_defectTQFT} by evaluating floating decorated intervals and circles via $\alpha_\I$ and $\alphac$ correspondingly and applying the universal construction to derive further relations on linear combinations of cobordisms with outer boundary, see also~\cite{Kh3, Kh2,KS3}. As before, objects of $\mcC_{\alpha}$ are finite sign sequences $\varepsilon$. 

\vspace{0.1in} 

In the category $\mcC_{\alpha}$, state spaces $A(+),A(-)$ depend only on $\alpha_\I$ and they are spanned by elements $|\omega\rangle$, $\omega\in \Sigma^{\ast}$, respectively $\langle \omega |$, $\omega\in \Sigma^{\ast} $. State space $A(+-)$ is spanned by diagrams of two types:
\begin{enumerate}
\item\label{item:state-space+-span-01}
pairs of decorated half-intervals with opposite orientations, 
\item\label{item:state-space+-span-02} decorated outer arcs, 
\end{enumerate}
see Figure~\ref{linear-0007}.

\input{linear-0007}

$A(+-)$ is naturally a unital associative finite-dimensional algebra, with the unit element given by an outer arc with the trivial decoration and with the  multiplication shown in Figure~\ref{linear-0008}. Algebra $A(+-)$ acts on $A(+)$ on the left, see Figure~\ref{linear-0008}. 

\input{linear-0008}

Denote by $\mmcI$ 
the subspace of $A(+-)$ spanned by diagrams of  type~\eqref{item:state-space+-span-01}, see Figure~\ref{linear-0007} second from left picture. This subspace is a two-sided ideal of $A(+-)$ and a unital $\kk$-algebra with the unit element  $1'=\sum_{i=1}^k v^i\otimes v_i$ shown in Figure~\ref{linear-0004} on the right hand side of the equality. Note that the equality $1'=1$ fails unless $\alpha_{\circ}=\alphai^{\tr}$. 
In general, the right hand side diagram is the unit element of $A(+-)$ and the left hand side is the idempotent $1'$. 

There is a natural algebra isomorphism  
\begin{equation}
    \mmcI \cong A(+)\otimes A(-) \cong \End(A(+))   
\end{equation}
coming from the faithful action of $\mmcI$ on $A(+)$, given by restricting the action from that of $A(+-)$.  

The kernel $\mmcK$ of the action of $A(+-)$ on $A(+)$ is a  two-sided ideal of $A(+-)$, complementary to $\mmcI$, giving 
a direct product decomposition 
\begin{equation}\label{eq_product}
    A(+-) \ \cong \ \mmcI \times \mmcK. 
\end{equation}
In this decomposition both terms on the right are unital $\kk$-algebras, with the unit element of $\mmcK$ given by the image of $1\in A(+-)$ under the projection, that is, by 
\begin{equation}\label{eq_oneK} 
1_{\mmcK} \ := \ 1-1' = 1-\sum_{i=1}^k v^i\otimes v_i.
\end{equation}
In particular, $1_{\mmcK}$ generates $\mmcK$ as an $A(+-)$-bimodule.

Denote by $U$ the subspace of $A(+-)$ spanned by diagrams of type~\eqref{item:state-space+-span-02}, that is, by decorated arcs connecting $+$ and $-$ boundary points, see the picture on the right in Figure~\ref{linear-0007}. Recall that we denote by $\circleft(\omega)$ the arc decorated by $\omega$, see Figure~\ref{linear-0007} on the right.
$U$ is a unital subalgebra of $A(+-)$ and there are algebra inclusions
\begin{equation}
      U \subset A(+-) \supset A(+)\otimes A(-)\cong \mcI .
\end{equation}
Subalgebra $U$ surjects onto $\mmcK$ upon projection to the second term in the direct product \eqref{eq_product}, and there is a short exact sequence
\begin{equation} \label{eq_exact_U}
    0 \lra U \cap \mmcI \lra U \lra \mmcK \lra 0 . 
\end{equation}
The first term 
\begin{equation} \label{eq_U_prime} 
    U' \ := \ U \cap \mmcI 
\end{equation} 
is a two-sided ideal of $U$. Elements in $U'$ are linear combinations of decorated arcs (elements of $U$) that decompose in $A(+-)$ into linear combinations of pairs of half-intervals. These decompositions are unique as elements of $A(+)\otimes A(-)\cong \mmcI\subset A(+-)$.

This data carries a triple of discrete invariants: 
\begin{equation}\label{eq_triple}
    (\dim A(+), \dim (U'), \dim \mmcK), 
\end{equation}
which are three non-negative integers. 
Note that $\dim A(-)=\dim A(+)$, $\dim \mmcI=(\dim A(-))^2$ and $\dim (U')\le (\dim A(+))^2$. 

\begin{remark} \label{rem_repeat_1}
The natural inclusion $A(+)\otimes A(-)\subset A(+-)$ is an isomorphism if and only if $\mmcK=0$. 
This is exactly the case when there is a decomposition of the identity, 
that is, when the undecorated arc $1\in A(+-)$ lies in $\mmcI \cong A(+)\otimes A(-)$, that is, when $1$ is \emph{decomposable} (also when $1=1'$, see earlier).  
In this case $A(+-)\cong A(+)\otimes A(-)$ and, more generally, $A(\varepsilon\varepsilon')\cong A(\varepsilon)\otimes A(\varepsilon')$ for any sign sequences $\varepsilon,\varepsilon'$.

Equivalently, the category $\mcC_{\alpha}$ gives a TQFT rather than just a topological theory (with defects) if and only if $\mmcK=0$.   This is possible for a unique rational circular series $\alphac=\alphai^{\tr}$ associated with $\alpha_\I$ and with the identity decomposition determined by $\alpha_\I$, see formula \eqref{eq_tr_alpha}.  Circular evaluation $\alpha_\circ$ is then the trace of action of words on $A(+)$, see formula \eqref{eq_circ_eval_gen}, with $\alphac(\omega)=\alphai^{\tr}(\omega)=\tr_{A(+)}(\omega)$ for $\omega\in \Sigma^{\ast}$.  
The resulting theory $\alpha=(\alphai,\alphac)$ is a TQFT.

The Boolean version of the identity decomposition (which requires $A(+)$ to be a distributive semilattice) is considered in \cite{IK-automata}. 
\end{remark}

\begin{example} \label{ex_eval_1}
Consider the case when $\Sigma$ is empty. Then there are only two closed connected cobordisms, undecorated interval and circle. Suppose they evaluate to $1$ and $\lambda\not=1$, respectively, see Figure~\ref{linear-0009}, top middle. Then $A(+)$ and $A(-)$ are one-dimensional, with basis vectors $v$ and $v^{\ast}$, respectively, see Figure~\ref{linear-0009}, top right. 
The space $A(+-)$ is two-dimensional, with the basis shown in Figure~\ref{linear-0009}, top left. The subalgebra $\mmcK$ is one-dimensional,
$\mmcK=\kk w$, with a basis element $w$ shown in Figures~\ref{linear-0009}, bottom left, and~\ref{linear-0010}. The algebra $U$ is one-dimensional, with the unit element (an arc) as the basis element, see Figure~\ref{linear-0010}. 

In this example $U\cap \mmcI=U\cap \mmcK=0$, and projection of $U$ onto $\mmcK$ along $\mmcI$ is an algebra isomorphism, See Figure~\ref{linear-0010}. The triple of parameters is $(1,0,1)$. There is algebra decomposition $A(+-)\cong \mmcI\times \mmcK = \kk(1-w)\times \kk w.$

\input{linear-0009}

\input{linear-0010}

\end{example} 

%\vspace{0.1in} 

When $\Sigma=\{a\}$ is a one-element set, decoration of an interval or a circle is determined by the number $n\ge 0$ of dots on it. If an $n$-dotted interval evaluates to $\alpha_{\I,n}$ and $n$-dotted circle evaluates to $\alpha_{\circ,n}$, the evaluation is encoded by a pair of one-variable power series in a variable $T$: 
\begin{equation}\label{eq_encode} 
    Z_{\I}(T) \ := \ \sum_{n\ge 0} \alpha_{\I,n}T^n, \ \ \ 
    Z_{\circ}(T) \ := \ \sum_{n\ge 0} \alpha_{\circ,n}T^n,
\end{equation}
\input{linear-0011}
see Figure~\ref{linear-0011}.
A simple modification of a  result from~\cite{Kh3} shows the following. 
\begin{prop} A one-variable evaluation $\alpha=(\alpha_\I,\alpha_{\circ})$ is rational (the hom spaces in the category $\mcC_{\alpha}$ are finite-dimensional) if and only if both $Z_\I(T)$ and $Z_{\circ}(T)$ are rational functions in $T$, \textit{i.e.}, 
\begin{equation}\label{eq_generating}
    Z_\I(T) = \frac{P_{\I}(T)}{Q_{\I}(T)}, \hspace{1.0cm}  Z_{\circ}(T) = \frac{P_{\circ}(T)}{Q_{\circ}(T)}. 
\end{equation}
\end{prop}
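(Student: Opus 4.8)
The plan is to deduce the statement from the general criterion recorded earlier in this section --- that $\alpha$ is rational exactly when the hom spaces of $\mcC_{\alpha}$ are finite-dimensional, equivalently when $A(+)$ and $A(+-)$ are both finite-dimensional --- together with the classical one-variable fact that a formal series $\sum_{n\ge 0}a_nT^n$ is the expansion of a rational function precisely when the sequence $(a_n)_{n\ge 0}$ satisfies a linear recurrence (equivalently, its Hankel matrix has finite rank). Since $A(+)$ depends only on $\alpha_\I$, the cited \cite[Proposition 2.1]{Kh3} (see also \cite[Theorem 2.3]{Kh2}) already gives that $A(+)$ is finite-dimensional if and only if $Z_\I(T)$ is rational; concretely, this says the annihilator ideal $I_\I\subset\kk[T]$ of polynomials $\sum_n c_nT^n$ with $\sum_n c_n\alpha_{\I,n+r}=0$ for all $r\ge 0$ is nonzero. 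So the task reduces to analyzing $A(+-)$ under the standing assumption $\dim A(+)<\infty$.

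For that I would invoke the spanning description $A(+-)=\mmcI+U$ from Section~\ref{subsec_top_theory}: the subspace $\mmcI\cong A(+)\otimes A(-)$ is finite-dimensional as soon as $A(+)$ is, and $U$ is the span of the decorated arcs $\circleft(a^n)$, $n\ge 0$; hence $\dim A(+-)<\infty$ iff $\dim U<\infty$. I would present $U$ as the image of the surjection $\kk[T]\to U$ sending $T^n\mapsto\circleft(a^n)$ and compute its kernel through the universal construction: a combination $\sum_n c_n\circleft(a^n)$ vanishes in $A(+-)$ iff all of its closures by morphisms $+-\to\emptyset_0$ evaluate to $0$. After evaluating away floating components, such a closing morphism is either a dotted arc joining $+$ to $-$, which turns $\circleft(a^n)$ into a circle with $n+m$ dots of value $\alpha_{\circ,n+m}$, or a pair of dotted half-intervals, which turns it into an interval with $n+r$ dots of value $\alpha_{\I,n+r}$. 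Thus the kernel is the ideal $I_\I\cap I_\circ$, where $I_\circ\subset\kk[T]$ consists of $\sum_n c_nT^n$ with $\sum_n c_n\alpha_{\circ,n+m}=0$ for all $m\ge 0$.

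To finish, I would use that $\kk[T]$ is a principal ideal domain: since $I_\I\ne 0$, the intersection $I_\I\cap I_\circ$ is nonzero exactly when $I_\circ\ne 0$, i.e. exactly when $(\alpha_{\circ,n})_{n\ge 0}$ is linearly recurrent, i.e. exactly when $Z_\circ(T)$ is a rational function. Combining this with the $A(+)$ discussion yields the claimed equivalence. The one step I expect to need genuine care --- and the place where this argument ``modifies'' the setup of \cite{Kh3}, in which the circular evaluation is forced to equal $\alphai^{\tr}$ --- is the identification of the closures of outer-arc diagrams: one must check that it is precisely the dotted-circle evaluations $\alpha_{\circ,n+m}$, and not merely the $\alpha_\I$-values already controlled by $A(+)$, that cut $U$ out inside $A(+-)$. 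Everything after that is the standard recurrence/Hankel argument applied separately to each of $Z_\I$ and $Z_\circ$.
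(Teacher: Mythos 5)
Your argument is correct, and it is exactly the ``simple modification'' of \cite[Proposition~2.1]{Kh3} that the paper invokes without writing out: the cited result handles the interval side ($\dim A(+)<\infty$ iff $Z_\I$ is a rational function), and your kernel computation for $\kk[T]\twoheadrightarrow U$ supplies the new ingredient needed for the circular series. The identification $\ker = I_\I\cap I_\circ$ is the key step and you justify it correctly: closing an outer arc $\circleft(a^n)$ by a dotted arc produces $\alpha_{\circ,n+m}$, while closing by a pair of dotted half-intervals produces $\alpha_{\I,n+r+s}$, and the ranges $\{n+m\}$, $\{n+r+s\}$ exhaust everything needed to cut out $I_\circ$ and $I_\I$ respectively. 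Your conclusion is also consistent with the paper's later explicit computation in the ``General one-variable case'' paragraph, where $U\cong \kk[a]/(\mathsf{lcm}(g_\I(a),g_\circ(a)))$ is recorded --- that lcm ideal is precisely $I_\I\cap I_\circ$. The PID step that finishes the argument (given $I_\I\ne 0$, the intersection $I_\I\cap I_\circ$ is nonzero iff $I_\circ$ is) is the clean way to package the final equivalence.
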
 
It is also easy to see that $\alpha$ is a rational evaluation if and only if the state spaces $A(+),A(+-)$ are finite-dimensional. 

\vspace{0.1in} 

\begin{example} \label{ex_2}
Let us construct an example with parameters $(1,1,1)$ as in \eqref{eq_triple}. Since $\dim(U)=2$, take $\Sigma=\{a\}$. Since $\dim A(+)=1$, the dot acts by some scalar $s$ on the endpoint, see Figure~\ref{linear-ex-0001}. Pick evaluation of undecorated interval and circle to be $1$ and $\lambda\not=1 $, respectively, and introduce parameter $t$ for the skein relation in Figure~\ref{linear-ex-0001} top right. 

\input{linear-ex-0001}

Attaching half-interval at the top right endpoint in each diagram of the relation implies $s=t+1$. Closing up the skein relation by an arc with $n$ dots gives an inductive formula for a circle with $n$ dots. Generating functions \eqref{eq_generating} are
\begin{equation}\label{eq_generating_ex2}
    Z_\I(T) = \frac{1}{1-(t+1)T},  \hspace{1cm}  Z_{\circ}(T) = \frac{\lambda-1}{1-tT} + \frac{1}{1-(t+1)T}. 
\end{equation}
Inductive skein relation to reduce the number of dots is shown in Figure~\ref{linear-ex-0002} on the left. Figure~\ref{linear-ex-0002} on the right shows the unit element and the basis vector of algebra $\mmcK\cong \kk$, equal to $1-1'$. 

\input{linear-ex-0002}

We get a decomposition $A(+-)\cong \mmcI\times \mmcK\cong \kk\times \kk$ of $A(+-)$ into the product of two copies of the ground field. 

\end{example} 

\begin{example}\label{ex_3}
Let us give an example with  $\dim A(+)=2$, $\dim(U')=1$, 
$\dim(\mmcK)=1$  and $U'=\mc{I}\cap U$, a  nilpotent ideal in $U$.
Let $\Sigma=\{a\}$ and make $a$ nilpotent, $a^2=0\in U$, see Figure~\ref{linear-ex-0003}. Then $\{\langle \emptyset|, \langle a |\}$ is a basis of $A(+)$ and for the bilinear pairing $A(+)\times A(-)\lra \kk$ we can choose the one in Figure~\ref{linear-ex-0003}, with a parameter $\mu\in \kk$. Additionally, choose a relation reducing dot on an arc to a pair of dotted half-intervals. Evaluation $\alpha$ is then shown in Figure~\ref{linear-ex-0003} second and bottom rows. 

\input{linear-ex-0003}

\input{linear-ex-0007} 

Space $A(+-)$ is spanned by the six vectors, with the pairing given in Figure~\ref{linear-ex-0004} (rows 4 and 6 are equal and columns 4 and 6 are equal). Dropping the last row and column gives us a basis of $A(+-)$ of cardinality $5$. See Figure~\ref{linear-ex-0007}. 

\input{linear-ex-0004}
 
The structure of the short exact sequence \eqref{eq_exact_U} for this example is shown in Figure~\ref{linear-ex-0005} left. 
The generating functions are 
\begin{equation}\label{eq_generating_ex3}
    Z_\I(T) = \mu+T,  \hspace{1cm}  Z_{\circ}(T) = \lambda,
\end{equation}
see also Figure~\ref{linear-ex-0003}. 

\input{linear-ex-0005}

\input{linear-ex-0006} 

\end{example}

%%%%%%%%%%%%%%%%%%%
% Karoubi envelope 
%%%%%%%%%%%%%%%%%%%

\subsection{Karoubi envelope decomposition for arbitrary rational \texorpdfstring{$\alpha$}{alpha}}\label{subsec_karoubi_env} 

We now go back to the case of arbitrary $\Sigma$ of finite cardinality and a rational pair $\alpha=(\alphai,\alphac)$. Recall the direct product decomposition of $A(+-)$ into $\mmcK$ and  the matrix ring $\mmcI$, and the formula for the unit element of $\mmcK$:  
\begin{equation} \label{eq_recall} 
A(+-) \cong \mmcI \times \mmcK, \ \ \mmcI\cong \End_{\kk}(A(+)), \ \ 1_{\mmcK} = 1 - \sum_{i=1}^k v^i\otimes v_i. 
\end{equation}  
Elements of $\mmcK$ placed on arcs act trivially on $A(+)$ and $A(-)$, see Figure~\ref{linear-0019}. In the earlier examples, elements of $\mmcK$ are shown in Figures~\ref{linear-0009},~\ref{linear-ex-0002},~\ref{linear-ex-0006}, where in each case $\dim \mmcK=1$. 

\vspace{0.1in}

\input{linear-0019}

\vspace{0.1in} 

Denote by 
\begin{equation}\label{eq_p} 
    p \ : \ U \subset A(+-) \stackrel{p_{\mmcK}}{\lra} \mmcK
\end{equation}
the composition of the inclusion of algebra $U$ into $A(+-)$ and projection $p_{\mmcK}$ onto $\mmcK$ along $\mmcI$. The latter map can be written as either left or right multiplication by $1_{\mmcK}$, 
\begin{equation}\label{eq_p_U} 
    p_{\mmcK}(y) = 1_{\mmcK} \, y = y 1_{\mmcK},  
\end{equation}
and, hiding the inclusion, we can write $p(x) = 1_{\mmcK}\, x = x 1_{\mmcK}$ for $x\in U$. 

Furthermore, the surjection $\kk \Sigma^{\ast}\lra U$ can be composed with $p$ above. Denote the resulting algebra homomorphism  by 
\begin{equation}\label{eq_p_sigma} 
p_{\ast}:\kk\Sigma^{\ast}\lra \mmcK,  \ \ p_{\ast}(\omega) = 1_{\mmcK}\omega = \omega 1_{\mmcK}, \ \ \omega\in \Sigma^{\ast}, 
\end{equation} 
where, when multiplying by $1_{\mmcK}$,  we view $\omega$ as an element of $U$ or $A(+-)$. 

Introduce a a trace map $\tr$ on $\mmcK$ by the circular closure and evaluation, 
see Figure~\ref{linear-0020}, 
\begin{equation}\label{eq_tr_on_K} 
    \tr \ : \ \mmcK \lra \kk. 
\end{equation} 
Note that, in general, both $\alphai$ and $\alphac$ are used for the evaluation, since elements of $\mmcK$ are linear combinations of elements of $U$ and $A(+)\otimes A(-)$. The circular closure on elements on $U$, respectively $A(+)\otimes A(-)$, is computed via $\alphac$, respectively $\alphai$. 

\vspace{0.1in} 

\input{linear-0020}

The following relation between traces holds: 
\begin{equation}\label{eq_tr_pstar}
     \tr (p_{\ast}(\omega)) \ = \ \alphac(\omega) - \alphai^{\tr}(\omega),  \ \  \omega\in \Sigma^{\ast}, 
\end{equation}
where 
\begin{equation}
    \alphai^{\tr}(\omega) \ := \   \tr_{A(+)}(\omega). 
\end{equation}
Recall that evaluation $\alphai^{\tr}$ depends only on the interval evaluation $\alphai$ and is given by the trace of words on $A(+)$. 

Note that $\tr(\omega)=\alphac(\omega)$, since we view $\omega$ as an element of either $\kk\Sigma^{\ast}$ or $U$, via the projection onto the latter. In formula \eqref{eq_tr_pstar} projection $p_{\ast}$ appears, which introduces an additional term. 

\begin{remark}
If comparing to formula \eqref{eq_tr_alpha}, recall that there circular series $\alphac$ was picked to depend on $\alphai$ and give a 1D TQFT, while here we are considering arbitrary rational $\alphai,\alphac$. 
\end{remark}

Extending linearly from $\omega$ to elements of $\kk\Sigma^{\ast}$, we get 
\begin{equation}\label{eq_trace_K}
     \tr (p_{\ast}(z)) \ = \ \alphac(z) - \alphai^{\tr}(z),  \hspace{0.75cm}   z \in \kk \Sigma^{\ast}, 
\end{equation}

\begin{prop} Trace $\tr$ in formula \eqref{eq_tr_on_K} turns $\mmcK$ into a symmetric Frobenius algebra with the unit element $1_{\mmcK}$.  
\end{prop}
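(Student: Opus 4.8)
The plan is to check the two properties that, together with the already-recorded fact (see \eqref{eq_oneK}) that $1_{\mmcK}$ is a two-sided unit for $\mmcK$, make $(\mmcK,\tr)$ a symmetric Frobenius algebra: cyclicity $\tr(xy)=\tr(yx)$, and nondegeneracy of the bilinear form $(x,y)\mapsto\tr(xy)$. Throughout I use that, by \eqref{eq_product}, $\mmcK$ is a two-sided ideal and a direct factor of $A(+-)$ cut out by the central idempotent $1_{\mmcK}$; hence the product of $x,y\in\mmcK$ computed in $\mmcK$ agrees with their product in $A(+-)$, and the trace \eqref{eq_tr_on_K} on $\mmcK$ is simply the restriction to $\mmcK\subset A(+-)$ of the circular-closure trace $\tr_{A(+-)}\colon A(+-)\to\kk$.

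First I would prove cyclicity by a topological argument. For $x,y\in\mmcK\subset A(+-)$, the scalar $\tr(xy)$ is the $\alpha$-evaluation of the closed decorated $1$-manifold obtained by stacking the diagram of $x$ above that of $y$ (the multiplication of Figure~\ref{linear-0008}) and joining the leftover $+$ and $-$ endpoints by the closure arc of Figure~\ref{linear-0020}. This closed diagram lives on an annulus, with the $x$-part and the $y$-part on complementary arcs; rotating the annulus exchanges them and carries the closure of $xy$ to the closure of $yx$. Since $\alpha$ evaluates a decorated $1$-manifold only through its homeomorphism type — equivalently, $\alphac$ records a circle only through its class in $\Sigma^{\ast}_{\circ}$, and $\alphai$ an interval only through its word — we get $\tr(xy)=\tr(yx)$. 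The same argument applied to arbitrary elements shows that $\tr_{A(+-)}$ is a cyclic (``symmetric'') trace on $A(+-)$.

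For nondegeneracy, the key input is that $A(+-)$ is itself a symmetric Frobenius algebra with Frobenius trace $\tr_{A(+-)}$. Here one uses that the object $+-$ is self-dual in the rigid category $\mcC_{\alpha}$ — since $(+)^{\vee}=-$, $(-)^{\vee}=+$, and dualizing reverses tensor order, $(+-)^{\vee}=(-)^{\vee}(+)^{\vee}=+-$ — so that the composition pairing $\Hom_{\mcC_{\alpha}}(\emptyset_0,+-)\times\Hom_{\mcC_{\alpha}}(+-,\emptyset_0)\lra\End_{\mcC_{\alpha}}(\emptyset_0)=\kk$ is transported, via the cup/cap morphisms, to the form $(x,y)\mapsto\tr_{A(+-)}(xy)$ on $A(+-)=\Hom_{\mcC_{\alpha}}(\emptyset_0,+-)$. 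By construction of $\mcC_{\alpha}$ via the universal construction, two linear combinations of cobordisms are declared equal exactly when all of their closures evaluate equally, which is precisely the statement that this composition pairing is nondegenerate; hence $(x,y)\mapsto\tr_{A(+-)}(xy)$ is a nondegenerate form and $A(+-)$ is symmetric Frobenius. To transport this to $\mmcK$: given $0\ne z\in\mmcK$, choose $w\in A(+-)$ with $\tr_{A(+-)}(zw)\ne 0$ and decompose $w=w_{\mmcI}+w_{\mmcK}$ along \eqref{eq_product}; since $\mmcK$ and $\mmcI$ are orthogonal ideals in the direct product, $z\,w_{\mmcI}=0$, so $0\ne\tr_{A(+-)}(zw)=\tr(z\,w_{\mmcK})$ with $w_{\mmcK}\in\mmcK$, and $\tr$ is nondegenerate on $\mmcK$. (Equivalently, the Frobenius form of $A(+-)$ is block-diagonal for the ideal decomposition $A(+-)=\mmcI\oplus\mmcK$, hence nondegenerate on each block; as a sanity check, on $\mmcI\cong\End_{\kk}(A(+))$ it reproduces a multiple of the matrix trace, consistent with Figure~\ref{linear-0004} and with $\tr(1')=\dim A(+)$.) Combined with cyclicity and the unit $1_{\mmcK}$, this yields the proposition.

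The main obstacle is the identification used in the third paragraph: that the universal-construction composition pairing on $A(+-)$ coincides, under the self-duality $(+-)^{\vee}\cong+-$, with the trace form $\tr_{A(+-)}(xy)$. One must verify that bending the two legs of the second argument around the closure arc genuinely turns ``compose then close'' into ``multiply in $A(+-)$, then close'' — i.e. that the cup/cap duality morphisms of $\mcC_{\alpha}$ are compatible with the multiplication of Figure~\ref{linear-0008} (equivalently, that the resulting algebra structure on $A(+-)$ is the one on $\End_{\mcC_{\alpha}}(+)$ transported through $A(+-)\cong\Hom_{\mcC_{\alpha}}(\emptyset_0,+\otimes +^{\vee})\cong\End_{\mcC_{\alpha}}(+)$). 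Everything downstream — cyclicity by rotation and the passage to the direct factor $\mmcK$ — is then routine diagram manipulation and elementary algebra.
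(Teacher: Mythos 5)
Your proof is correct and takes essentially the same route as the paper's: symmetry of $\tr$ from the invariance of the circular closure under rotation, and nondegeneracy by restricting the nondegenerate trace form on $A(+-)$ (which is the universal-construction pairing) to the ideal $\mmcK$, using the orthogonality $\tr(\mmcK\cdot\mmcI)=0$ coming from the direct-product decomposition. The ``obstacle'' you flag — identifying the composition pairing with $(x,y)\mapsto\tr_{A(+-)}(xy)$ via the self-duality of $+-$ — is a genuine verification but a routine one (both are evaluations of the same annular closure), and the paper's proof tacitly makes the same identification.
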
 
A Frobenius algebra is called \emph{symmetric} if $\tr(xy)=\tr(yx)$ for any elements $x,y$. 
\begin{proof} 
The trace \eqref{eq_tr_on_K} on $\mmcK$ is symmetric, since the circular closure is symmetric. The pairing $A(+-)\otimes A(+-) \lra \kk$ is non-degenerate, and elements of $\mmcK$ are orthogonal to those of  $\mmcI = A(+)\otimes A(-)$ in this pairing. Consequently and in view of the direct product (hence direct sum) decomposition \eqref{eq_recall}, any nonzero element of $\mmcK$ can be paired to some element of $\mmcK$ to get a nontrivial evaluation, implying that the trace is non-degenerate. 
\end{proof} 

\begin{remark}\label{rem_symm}
Vice versa, any symmetric Frobenius algebra $(B,\tr_B)$ can be obtained from some rational evaluation $\alpha=(\alphai,\alphac)$ in this way. For that, choose a set $\Sigma$ of generators of algebra $B$ and form circular series $\alpha_{\circ}(\omega)=\tr_B(\omega)$, viewing $\omega\in\Sigma^{\ast}$ as an element of $B$ via the monoid homomorphism $\Sigma^{\ast}\lra B$, where $B$ is naturally a monoid under multiplication. Set the interval evaluation $\alphai$ identically to zero, $\alphai(\omega)=0$ for any word $\omega$. Then $A(+)=0=A(-)$, 
the trace $\alphai^{\tr}=0$, and $\mmcK=A(+-)\cong B$ with the trace $\tr$. Similar examples can be produced with $A(+)\not= 0$.  
\end{remark}

Pair $(B,\tr_B)$ as in Remark~\ref{rem_symm} gives rise to monoidal category $\mcC_B$ obtained via the universal construction as follows. First consider the category $\mcC'_B$ with objects -- finite sign sequences and morphisms finite $\kk$-linear combinations of one-dimensional cobordisms with defects. Floating endpoints are not allowed this time, and defects are labelled by elements of $B$, see Figure~\ref{linear-0025} on the left. Concatenation and addition of defects corresponds to multiplication and addition in $B$, and decorated circles are evaluated via the trace on $B$, see Figure~\ref{linear-0025}. 
\input{linear-0025}

Since $B$ is not, in general, commutative, it is essential to require strands to be oriented, to make sense of the concatenation formula in Figure~\ref{linear-0025} (second diagram from left). 
The resulting category $\mcC_B'$ can be thought as a decorated version of the oriented Brauer category and is known as the \emph{Frobenius--Brauer category}~\cite{SS22,MS22,Savage-notes-21}. Next, we apply the universal construction to $\mcC'_B$ (since trace $\tr_B$ allows to evaluate any closed diagram) to get the quotient category, denoted $\mcC_B$. Equivalently, one can define $\mcC_B$ as the quotient of $\mcC_B'$ by the ideal of negligible morphisms. Trace on $B$ is omitted from our notations for these two categories, but both $\mcC'_B$ and $\mcC_B$ depend on it as well. 

\begin{remark} Algebra $A(+-)$ carries two trace maps, see Figure~\ref{linear-0024}. This pair of maps is nondegenerate on $A(+-)$, in a suitable sense. In the above construction only the first trace map is considered.

\input{linear-0024}

\end{remark} 

\begin{remark} It is interesting that starting with a 1-dimensional theory with defects one obtains a symmetric Frobenius algebra $\mmcK$, since the latter describes a 2D TQFT for thin surfaces, see \cite[page 19]{KQ-notes-20} as well as \cite{LP08,LP09,MS06,lauda2006open,KQR},  hinting at a sort of dimensional lifting. We discuss this later in the paper, in Section~\ref{section_2d}. 
\end{remark}

Algebra $A(+-)$ has an idempotent decomposition \eqref{eq_recall}, which we can write as 
\begin{equation}
    1 \ = \ 1_{\mmcK} + \sum_{i=1}^k v^i\otimes v_i, 
\end{equation}
with each $v^i\otimes v_i$, $1\le i\le k$ and $1_{\mmcK}$ together constituting $k+1$ mutually orthogonal idempotents. 
There are natural algebra isomorphisms 
\begin{equation}\label{eq_bend_alg}
     \End_{\mcC_{\alpha}}(+) \ \cong \ A(+-) \cong \End_{\mcC_{\alpha}}(-)^{\mathrm{op}}. 
\end{equation}
given by bending strands, see Figures~\ref{linear-0022} and~\ref{linear-0023}.

\input{linear-0022}

\vspace{0.1in} 
 
\input{linear-0023}

The corresponding orthogonal idempotent decompositions in $\End_{\mcC_{\alpha}}(+)$ and $\End_{\mcC_{\alpha}}(-)^{\mathrm{op}}$ are given by 
\begin{equation}\label{eq_id_p_m}
    \id_+ \ = \ 1^+_{\mmcK} + \sum_{i=1}^k v^i \otimes v_i^{\ast}, \ \ \ \ \ 
    \id_- \ = \ 1^-_{\mmcK} + \sum_{i=1}^k v_i \otimes v^i_{\ast},
\end{equation}
see Figure~\ref{linear-0021}

\vspace{0.1in} 

\input{linear-0021}

\vspace{0.1in} 

Consider the Karoubi envelope $\KarCa$ of $\mcC_{\alpha}$,  also denoted $\undDC_{\alpha}$ in~\cite{Kh3}. Denote by $e^+_0, e^+_1, \dots, e^+_k$ the idempotents in the endomorphism ring of $+$, see \eqref{eq_id_p_m}, so that $e_0^+=1^+_{\mmcK}$ and $e_i=v^i\otimes v_i^{\ast}$, and use the same notation but with $-$ instead of $+$  for the $-$ object, so that 
\begin{equation}\label{eq_idemp_pm}
    \id_+ = e^+_0+ \ldots + e^+_k , \ \ \ \ 
    \id_- = e^-_0+ \ldots + e^-_k
\end{equation}
Objects $+$  and $-$  in the category $\KarCa$ are isomorphic to the direct sum of objects  
\begin{equation}
    + \cong \oplus_{i=0}^k (+, e^+_i), \ \ \ \ - \cong \oplus_{i=0}^k (-, e^-_i). 
\end{equation}
For $0\le i \le k$ objects $(+,e^+_i)$ and $(-,e^-_i)$ are dual. Furthermore, we have

\begin{prop}
Objects $(+,e_i^+)$ and $(-,e_i^-)$ are isomorphic, for $i\ge 1$. Objects $(+,e_i^+)$ and $(+,e_j^+)$ are 
isomorphic, for $i,j\ge 1$. Each of these objects is isomorphic to the identity object $\oneb$ of $\Kar(\mcC_{\alpha})$.
\end{prop}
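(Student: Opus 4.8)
The plan is to reduce all three assertions to the single isomorphism $(+,e_i^+)\cong\oneb$ for $i\ge 1$, with the other two following by duality and transitivity. Throughout I use the description of Hom-spaces in the Karoubi envelope, $\Hom_{\Kar(\mcC_\alpha)}((X,e),(Y,f)) = f\,\Hom_{\mcC_\alpha}(X,Y)\,e$, the fact that $\End_{\mcC_\alpha}(\emptyset_0)=\kk$ so that $\oneb=\emptyset_0$ is a simple object, and the identifications \eqref{eq_bend_alg}, \eqref{eq_id_p_m} of the endomorphism rings of $+$ and $-$ with $A(+-)\cong\mmcI\times\mmcK$, under which $e_i^+\leftrightarrow v^i\otimes v_i^{\ast}$ for $i\ge 1$. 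The conceptual point is that all the idempotents $e_i^+,e_i^-$ with $i\ge 1$ lie inside the matrix-algebra direct factor $\mmcI\cong\End_{\kk}(A(+))$, so the extra ``Frobenius'' idempotent $e_0^+=1^+_{\mmcK}$ never enters, and the argument is the one already carried out for $\mcC'_{\alphai}$ in Figures~\ref{linear-0016}--\ref{linear-0017}.

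\textbf{The key isomorphism.} Fix $i\ge 1$ (this is vacuous unless $k=\dim A(+)\ge 1$). Let $v^i\in A(+)=\Hom_{\mcC_\alpha}(\emptyset_0,+)$ be the $i$-labelled upward half-interval and $v_i\in\Hom_{\mcC_\alpha}(+,\emptyset_0)\cong A(-)$ the $i$-labelled downward half-interval. Using the floating-interval evaluations of Figure~\ref{linear-0004} one checks $e_i^+\circ v^i=v^i$ and $v_i\circ e_i^+=v_i$, so $v^i$ and $v_i$ define morphisms $\oneb\to(+,e_i^+)$ and $(+,e_i^+)\to\oneb$ in $\Kar(\mcC_\alpha)$; the identity $e_i^+\circ v^j=\delta_{i,j}\,v^i$ moreover shows both Hom-spaces are one-dimensional. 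The composite $v_i\circ v^i$ is the floating $i,i$-interval, which evaluates to $(v_i,v^i)=1=\id_{\oneb}$, and the composite $v^i\circ v_i$ reassembles to the diagram $v^i\otimes v_i^{\ast}=e_i^+$, the identity of $(+,e_i^+)$. Hence $v^i$ and $v_i$ are mutually inverse, giving $(+,e_i^+)\cong\oneb$.

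\textbf{The remaining statements.} Since $(+,e_i^+)\cong\oneb\cong(+,e_j^+)$ for all $i,j\ge 1$, these objects are isomorphic to one another; if an explicit isomorphism is wanted, one takes the ``matrix units'' $v^i\otimes v_j^{\ast}$ and $v^j\otimes v_i^{\ast}$, whose composites are $e_i^+$ and $e_j^+$ by the dual-basis relation, exactly as in Figure~\ref{linear-0017}. For $(+,e_i^+)\cong(-,e_i^-)$ I invoke that these two objects are dual in the rigid monoidal category $\Kar(\mcC_\alpha)$ (recorded above), that duals are unique up to canonical isomorphism, that the monoidal unit $\oneb$ is canonically self-dual, and that $(+,e_i^+)\cong\oneb$; hence $(-,e_i^-)\cong\oneb^{\ast}\cong\oneb$. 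Chaining these isomorphisms proves the proposition.

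\textbf{Main obstacle.} The only delicate part is the orientation bookkeeping: one must verify that under the bending isomorphisms \eqref{eq_bend_alg} the cobordism $v^i$ (respectively $v_i$) is genuinely fixed on the right (respectively left) by $e_i^+$, and that $v^i\circ v_i$ really reassembles to the diagram $v^i\otimes v_i^{\ast}$ representing $e_i^+$ rather than to some other pairing of the two half-intervals. This is a direct diagram chase using only the evaluations $(v_j,v^i)=\delta_{i,j}$ of Figure~\ref{linear-0004}; no input beyond what was used to establish $\Kar(\mcC'_{\alphai})\simeq\kkvect$ is needed, and the $\mmcK$-part of $A(+-)$ is never touched.
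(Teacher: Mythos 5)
Your proposal is correct and essentially the same as the paper's proof: the key step in both is exhibiting the $i$-labelled half-intervals $v^i$ and $v_i$ as mutually inverse morphisms between $(+,e_i^+)$ and $\oneb$, and the "matrix unit" morphisms $v^i\otimes v_j^{\ast}$, $v^j\otimes v_i^{\ast}$ (Figure~\ref{linear-0017}) for $(+,e_i^+)\cong(+,e_j^+)$. The only cosmetic difference is that the paper establishes $(+,e_i^+)\cong(-,e_i^-)$ by the explicit bending morphisms of Figure~\ref{linear-0016}, whereas you invoke uniqueness of duals and self-duality of $\oneb$; both are valid and rest on the same dual-basis evaluation $(v_j,v^i)=\delta_{i,j}$.
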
 
\begin{proof} It is enough to set up pairs of morphisms between the corresponding objects of $\mcC_{\alpha}$ such that the compositions are the corresponding idempotents. These morphisms are shown in Figure~\ref{linear-0017} for objects $(+,e_i^+)$ and $(+,e_j^+)$. For objects $(+,e_i^+)$ and $(-,e_i^-)$ the morphisms are shown in Figure~\ref{linear-0016} on the right. Note also that $\End_{\KarCa}((+,e_i^+))=\kk$. 
Isomorphisms between each of $(+,e_i^+)$ and $(-,e_i^-)$ and $\oneb$ are given by the half-intervals (up oriented for $+$, down oriented for $-$)  with the top or bottom boundary $+$ or $-$. The identity object $\oneb$ is represented by the empty $0$-manifold  $\emptyset_0$ (by the empty sign sequence). 
\end{proof}

\begin{corollary}
  $2k$ objects $(+,e_1^+),\dots,(+, e_k^+)$ and $(-,e_1^-),\dots,(-, e_k^-)$ are pairwise isomorphic, and each is isomorphic to $\oneb$. For any of these objects $X$ the endomorphism ring $\End_{\KarCa}(X)\cong \kk$. 
\end{corollary}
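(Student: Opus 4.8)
The plan is to deduce the corollary directly from the preceding proposition by transitivity of the isomorphism relation, and then to pin down the endomorphism ring by reducing to $\End_{\KarCa}(\oneb)$. First I would invoke the proposition, which already supplies, for every $i\ge 1$, an isomorphism $(+,e_i^+)\cong\oneb$ and an isomorphism $(-,e_i^-)\cong\oneb$ in $\KarCa$, the required two-sided inverse pairs of morphisms being the up- and down-oriented half-intervals recorded in Figure~\ref{linear-0016} on the right together with the chains in Figure~\ref{linear-0017}. Composing these isomorphisms and their inverses makes all $2k$ objects $(+,e_1^+),\dots,(+,e_k^+),(-,e_1^-),\dots,(-,e_k^-)$ mutually isomorphic, each isomorphic to $\oneb$.

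Next I would use the standard fact that in any additive category isomorphic objects have isomorphic endomorphism rings: a two-sided inverse pair $f\colon X\to Y$, $g\colon Y\to X$ induces the unital ring isomorphism $\phi\mapsto f\phi g$ from $\End_{\KarCa}(X)$ to $\End_{\KarCa}(Y)$. Applying this to each of the $2k$ objects $X$ in the list yields $\End_{\KarCa}(X)\cong\End_{\KarCa}(\oneb)$.

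Finally I would compute $\End_{\KarCa}(\oneb)$. The unit $\oneb$ is the empty sign sequence, represented by the empty $0$-manifold $\emptyset_0$, and since the canonical functor $\mcC_{\alpha}\to\KarCa$ is fully faithful, $\End_{\KarCa}(\oneb)=\Hom_{\mcC_{\alpha}}(\emptyset_0,\emptyset_0)$. This hom space is spanned by closed decorated one-manifolds; each such diagram is a disjoint union of $\Sigma$-decorated intervals and circles and is evaluated by $\alpha=(\alphai,\alphac)$ to an element of $\kk$, so the space is one-dimensional, spanned by $\id_{\oneb}$. Hence $\End_{\KarCa}(X)\cong\kk$ for every $X$ among the $2k$ objects.

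I do not expect any serious obstacle here: the corollary is essentially a bookkeeping consequence of the proposition. The only point requiring an argument beyond transitivity is the identification $\End_{\KarCa}(\oneb)\cong\kk$, which rests on the already-established facts that closure of $\mcC_{\alpha}$ does not change hom spaces between old objects and that every closed decorated $1$-cobordism evaluates, under the rational pair $\alpha$, to a scalar.
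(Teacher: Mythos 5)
Your proposal is correct and fills in exactly the routine details the paper leaves implicit. The paper simply notes $\End_{\KarCa}((+,e_i^+))=\kk$ inside the proof of the preceding proposition without elaboration, and the corollary is stated as an immediate packaging of the proposition; your route through $\End_{\KarCa}(\oneb)\cong\Hom_{\mcC_{\alpha}}(\emptyset_0,\emptyset_0)\cong\kk$ via full-faithfulness of $\mcC_{\alpha}\to\KarCa$ and the fact that every closed decorated $1$-cobordism evaluates to a scalar under the rational $\alpha$ is the same computation seen from the unit object, and the transfer of endomorphism rings along isomorphisms is exactly the bookkeeping the paper takes for granted.
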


Denote by $\mcC'$ the full tensor additive subcategory of $\KarCa$ generated by these $2k$ objects and $\oneb$. It is a rigid category. 

\begin{prop} The category $\mcC'$ is tensor (symmetric monoidal) equivalent to the category $\kkvect$ of finite-dimensional $\kk$-vector spaces with the standard tensor structure. It is Karoubi-closed. 
\end{prop}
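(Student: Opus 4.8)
The plan is to recognise $\mcC'$ as (the symmetric monoidal closure of) the full additive subcategory of $\KarCa$ on the single object $\oneb$, and then to match this with $\kkvect$. First I would trim the generating set: by the preceding Corollary each of the $2k$ objects $(+,e_i^+),(-,e_i^-)$, $1\le i\le k$, is isomorphic to the monoidal unit $\oneb$. Since $\oneb$ is the unit we have $X\otimes\oneb\cong X$, and because $\otimes$ distributes over $\oplus$ in the additive monoidal category $\KarCa$, the full additive subcategory on the objects $\oneb^{\oplus n}$, $n\ge 0$, is already closed under $\otimes$; hence it coincides with $\mcC'$, and every object of $\mcC'$ is isomorphic to a finite direct sum of copies of $\oneb$. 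Now define $F\colon \kkvect\to\mcC'$ by $F(V)=\oneb^{\oplus\dim V}$ on objects and, on morphisms, by sending a linear map (presented as a matrix once bases are fixed) to the corresponding matrix of scalars under the canonical isomorphism $\End_{\KarCa}(\oneb)\cong\kk$ from the Corollary. Additivity of $\Hom$ in $\KarCa$ gives
\[
  \Hom_{\mcC'}(\oneb^{\oplus m},\oneb^{\oplus n})\ \cong\ \Mat_{n\times m}(\End_{\KarCa}(\oneb))\ \cong\ \Mat_{n\times m}(\kk)\ =\ \Hom_{\kkvect}(\kk^{m},\kk^{n}),
\]
so $F$ is full and faithful, and it is essentially surjective by the previous reduction; hence $F$ is an equivalence of additive categories.

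Next I would promote $F$ to a symmetric monoidal equivalence. Equip $F$ with the coherence isomorphism $F(V)\otimes F(W)\xrightarrow{\ \sim\ }F(V\otimes_\kk W)$ assembled from the distributivity isomorphisms of $\KarCa$ together with a fixed isomorphism $\oneb\otimes\oneb\cong\oneb$ (namely the unit constraint), and with the identity $F(\kk)=\oneb$ as unit isomorphism. The pentagon and triangle axioms for $F$ hold because both sides of each diagram are morphisms in a hom-space of the form $\Mat_{\bullet}(\kk)$ built out of the coherence isomorphisms of $\KarCa$, and there is no room for them to disagree once $\End_{\KarCa}(\oneb)\cong\kk$. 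For the symmetry, the braiding $c_{\oneb,\oneb}\in\End_{\KarCa}(\oneb\otimes\oneb)\cong\kk$ is an invertible scalar, and compatibility with the unit isomorphisms forces $c_{\oneb,\oneb}=\id$; since the symmetric structure on all of $\mcC'$ is determined from $c_{\oneb,\oneb}$ by naturality and the hexagon axioms, $F$ intertwines it with the standard symmetry on $\kkvect$. Thus $\mcC'\simeq\kkvect$ as symmetric monoidal categories.

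Finally, $\mcC'$ is Karoubi-closed: an idempotent $e\in\End_{\mcC'}(\oneb^{\oplus n})=\Mat_n(\kk)$ is conjugate to the coordinate projection of rank $r=\rk(e)$, so its image in $\KarCa$ is isomorphic to $\oneb^{\oplus r}\in\mcC'$; equivalently, idempotent-completeness of $\kkvect$ transports across the equivalence $F$. The only step that demands genuine care is the monoidal-coherence verification in the second paragraph, together with the preliminary fact that $\mcC'$ is closed under $\otimes$; both, however, follow immediately from additivity of the tensor product of $\KarCa$ in each variable and from the rigidity of $\End_{\KarCa}(\oneb)\cong\kk$, which leaves no freedom for the constraints to misbehave.
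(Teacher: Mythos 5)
Your argument is correct and matches the paper's proof in substance: both reduce to the observation that all monoidal generators of $\mcC'$ are isomorphic to $\oneb$, so $\mcC'$ is the full additive monoidal subcategory generated by $\oneb$, and since $\End_{\KarCa}(\oneb)\cong\kk$ this subcategory is equivalent to $\kkvect$. The paper states this in one line as ``immediate''; you have simply spelled out the standard (and correct) coherence bookkeeping behind it.
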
 

\begin{proof} This is immediate since monoidal generators of $\mcC'$ are all equivalent to $\oneb$. The unit object $\oneb\in \Ob(\KarCa)$ generates a full monoidal subcategory of $\KarCa$ which is monoidal equivalent to $\kkvect$. 
\end{proof}

Recall the complementary idempotent $e_0^+=1_{\mcK}^+$ to $e_{>0}^+:=e_1^+ + \ldots + e_k^+$  in $\id_+$, see Figure~\ref{linear-0021}. Likewise, $e_0^-=1_{\mcK}^-$ is the complementary idempotent to $e_{>0}^-:=e_1^-+\ldots + e_k^-$ in $\id_-$.  
Consider the corresponding objects of $\KarCa$: 
\begin{equation}
 X_0^+ \ := \ (+,e_0^+),  \ \ \ X_0^- \ := \ (-,e_0^-), \ \ \ 
 X_{>0}^+ \ := \ (+,e_{\ast}^+), \ \ \ X_{>0}^- \ := \ (-,e_{\ast}^-). 
\end{equation}
Then the first pair of objects is \emph{monoidal orthogonal} to the second pair in the following strong sense. For any $n>0$, $m\ge 0$ 
\begin{eqnarray}
    \Hom_{\KarCa}((X_0^+\oplus X_0^-)^{\otimes n}, (X_{>0}^+\oplus X_{>0}^-)^{\otimes m}) & = &  0, \label{eq_ort1} \\
    \Hom_{\KarCa}((X_{>0}^+\oplus X_{>0}^-)^{\otimes m},(X_0^+\oplus X_0^-)^{\otimes n}) & = &  0, \label{eq_ort2}
\end{eqnarray}
that is, the space of homs between any nonempty finite tensor product of $X_0^+$ and $X_0^-$ and a finite tensor product of $X_{>0}^+$ and $X_{>0}^-$ is zero. In particular, there is only the $0$ morphism between any nonempty finite tensor product of $X_0^+$ and $X_0^-$ and $\oneb$. 

Equations \eqref{eq_ort1}, \eqref{eq_ort2} follow from the orthogonality between elements of $\mcK$ and elements of $A(+)$, $A(-)$ shown in Figure~\ref{linear-0019} on the left and center. 

Objects $X_0^+$ and $X_0^-$ are dual. Denote by $\wmcCa$ the full monoidal additive and Karoubi-closed subcategory of $\KarCa$ generated by these two objects (and object $\oneb$). We now relate this category to the Frobenius--Brauer category $\mcC'_{\mcK}$ associated to the Frobenius algebra $(\mcK,\tr_{\mcK})$ and its negligible quotient $\mcC_{\mcK}$, see earlier. 

\vspace{0.1in}

Recall that to a symmetric Frobenius $\kk$-algebra $(B,\tr_B)$, we have assigned a category $\mcC'_B$ (the Frobenius--Brauer category) of 1-cobordisms with dots decorated by elements of $B$ subject to relations in Figure~\ref{linear-0025}. Then category $\mcC_B$ is the quotient of $\mcC'_B$ via the universal construction for the evaluation of closed $B$-decorated 1-manifold via $\tr_B$. Equivalently, $\mcC_B$ is the \emph{gligible quotient} of $\mcC'_B$, the quotient by the 2-sided ideal of negligible morphisms. 

Now specialize to the symmetric Frobenius algebra $(\mcK,\tr_{\mcK})$ associated to the evaluation $\alpha$. 
We have canonical $\kk$-algebra isomorphisms $\End(X_0^+)\cong \mcK\cong \End(X_0^-)$. 

\begin{prop} These isomorphisms extend to a monoidal and full functor 
\begin{equation}\label{eq_fun_F0}
    \mcF_0 \ : \ \mcC'_{\mcK} \lra \KarCa
\end{equation}
taking the object $+\in \Ob(\mcC'_{\mcK})$ to $X_0^+=(+,e_0^+)$ and object $-$ to $X_0^-=(-,e_0^-)$. 
\end{prop}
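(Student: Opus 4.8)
The plan is to construct $\mcF_0$ in generators‑and‑relations fashion from the presentation of the Frobenius--Brauer category. On objects I would make $\mcF_0$ strict monoidal with $\mcF_0(\pm)=X_0^{\pm}$, so that a sign sequence $\varepsilon=\epsilon_1\cdots\epsilon_n$ is sent to $X_0^{\epsilon_1}\otimes\cdots\otimes X_0^{\epsilon_n}$, viewed as the subobject of $\varepsilon$ cut out by the idempotent $\prod_i e_0^{\epsilon_i}$ inside $\KarCa$. On morphisms, since $\mcC'_{\mcK}$ is generated as a $\kk$-linear monoidal category by the crossing, the oriented cups and caps, and the dots $\mathsf{dot}(x)$ for $x\in\mcK$, I would send the crossing to the symmetry of $\KarCa$ (which, being natural, restricts to the relevant subobjects), the cup and cap to the evaluation and coevaluation exhibiting the duality of $X_0^+$ and $X_0^-$ --- available because $\KarCa$ is rigid symmetric monoidal and $X_0^{\pm}$ are dual, as noted --- and $\mathsf{dot}(x)$ on an upward (resp.\ downward) strand to the image of $x$ under the algebra isomorphism $\End_{\KarCa}(X_0^+)\cong\mcK$ (resp.\ $\End_{\KarCa}(X_0^-)\cong\mcK$).

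Next I would check well-definedness, i.e.\ that the defining relations of $\mcC'_{\mcK}$ are respected. The purely diagrammatic relations --- the symmetric-group relations, naturality of the crossing (in particular dots sliding past crossings), and the zigzag identities for cups and caps --- are automatic, since $\mcF_0$ carries each of them to the corresponding identity in the symmetric monoidal rigid category $\KarCa$. Additivity and multiplicativity of dots and $\mathsf{dot}(1_{\mcK})=\id$ follow from $\End_{\KarCa}(X_0^{\pm})\cong\mcK$ being isomorphisms of unital algebras. The one relation requiring real work is the circle relation, that an $x$-decorated circle evaluates to $\tr_{\mcK}(x)$: I would unwind the inclusions $X_0^{\pm}\hookrightarrow\pm$ given by the idempotents $e_0^{\pm}=1_{\mcK}^{\pm}$ together with the bending isomorphism $\End_{\mcC_{\alpha}}(+)\cong A(+-)$ of \eqref{eq_bend_alg} to identify the image under $\mcF_0$ of such a circle with the circular closure of $x\in\mcK\subset A(+-)$ in the sense of Figure~\ref{linear-0020}, which by the very definition \eqref{eq_tr_on_K} of the trace on $\mcK$ equals $\tr_{\mcK}(x)$; here one uses that the undecorated arcs, squeezed between $e_0^{\pm}=1_{\mcK}^{\pm}$, furnish the evaluation and coevaluation of $X_0^{\pm}$, and that $1_{\mcK}$ absorbs the dot. \textbf{The main obstacle} is precisely this bookkeeping: keeping orientations and the idempotents $e_0^{\pm}$ straight through this chain of identifications, and confirming that no relation beyond those of Figure~\ref{linear-0025} is needed --- for which I would invoke the known presentation of the Frobenius--Brauer category in \cite{SS22,MS22,Savage-notes-21}.

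Monoidality of $\mcF_0$ is then immediate from its strict definition on objects together with the compatibility just verified on the generating morphisms. For fullness, I would note that for sign sequences $\varepsilon,\varepsilon'$,
\begin{equation*}
  \Hom_{\KarCa}\bigl(\mcF_0(\varepsilon),\mcF_0(\varepsilon')\bigr)
  \ = \ \Bigl(\textstyle\prod_j e_0^{\epsilon_j'}\Bigr)\,\Hom_{\mcC_{\alpha}}(\varepsilon,\varepsilon')\,\Bigl(\textstyle\prod_i e_0^{\epsilon_i}\Bigr),
\end{equation*}
since $\mcC_{\alpha}\to\KarCa$ is full, and that a spanning set of $\Hom_{\mcC_{\alpha}}(\varepsilon,\varepsilon')$ is given by reduced decorated $1$-cobordisms, whose connected components are arcs joining two boundary points and half-intervals joining a boundary point to a floating endpoint. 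By Figure~\ref{linear-0019} (left and center), any half-interval is annihilated once its boundary endpoint meets $e_0=1_{\mcK}$, so after the $e_0$-sandwich only oriented Brauer diagrams with $\Sigma$-decorations survive, and the sandwich replaces the word $\omega$ along each strand by $1_{\mcK}\,\omega\,1_{\mcK}=p_{\ast}(\omega)\in\mcK$. Since $p_{\ast}\colon\kk\Sigma^{\ast}\to\mcK$ is surjective --- the right-hand map of \eqref{eq_exact_U} precomposed with $\kk\Sigma^{\ast}\twoheadrightarrow U$ --- the elements $p_{\ast}(\omega)$ span $\mcK$, so the surviving spanning morphisms are exactly $\mcF_0$ applied to oriented Brauer diagrams carrying $\mcK$-labelled dots, i.e.\ to morphisms of $\mcC'_{\mcK}$; hence $\mcF_0$ is full. (Faithfulness is not asserted and indeed fails in general, being recovered only after passing to the negligible quotient $\mcC_{\mcK}$.)
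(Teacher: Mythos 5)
Your proposal is correct and follows essentially the same approach as the paper, which in its brief proof simply defines $\mcF_0$ on decorated arcs and asserts that the orthogonality relations \eqref{eq_ort1}--\eqref{eq_ort2} give well-definedness and fullness. You supply the missing details --- generators-and-relations verification via a presentation of the Frobenius--Brauer category, the circle-trace bookkeeping through the bending isomorphism, and the $e_0$-sandwich/spanning-set argument with surjectivity of $p_\ast$ --- all of which match the paper's intent.
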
 
\begin{proof} The functor $\mcF_0$ takes $\oneb$ to $\oneb$. It takes an arc carrying a dot labelled $x\in\mcK$ to the arc with the same label, which is now viewed as a morphism in $\KarCa$ between products of $X_0^+$ and $X_0^-$. For instance, The arc in Figure~\ref{linear-0024} on the left, for $x\in \mcK$, can be viewed as a morphism in $\mcC'_{\mcK}$ from $\oneb$ (the identity object in $\mcC'_{\mcK}$) to the object $+-$ and, alternatively, as a morphism in $\KarCa$ from $\oneb$ (the identity object in $\KarCa$) to the object $X_0^+\otimes X_0^-$. 

Earlier computations, including orthogonality relations \eqref{eq_ort1}, \eqref{eq_ort2} imply that $\mcF_0$ is well-defined and surjective on morphisms (a full functor). 
\end{proof} 

Category $\KarCa$ is the Karoubi envelope of $\mcC_{\alpha}$, the latter defined via the universal construction for the evaluation $\alpha$. Restricting evaluation $\alpha$ to closures of elements of $\mcK\subset A(+-)$ results in the trace $\tr_{\mcK}$ on $\mcK$ used in the construction of the category $\mcC'_{\mcK}$ and its gligible quotient $\mcC_{\mcK}$ (quotient via the universal construction). The two evaluations -- in categories $\mcC_{\alpha}$ and $\mcC'_{\mcK}$ -- match, for the above inclusion $\mcK\subset A(+-)$ . Consequently, we obtain the following statement. 

\begin{prop} \label{prop_F0} Functor $\mcF_0$  in \eqref{eq_fun_F0} factors through the gligible quotient $\mcC_{\mcK}$ of $\mcC'_{\mcK}$ and induces a fully-faithful and monoidal functor 
\begin{equation}
    \mcF \ : \ \mcC_{\mcK} \lra \KarCa
\end{equation}
\end{prop}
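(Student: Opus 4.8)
The plan is to exploit the fact that all three categories $\mcC'_{\mcK}$, $\mcC_{\mcK}$ and $\KarCa$ are governed by evaluations of closed diagrams, and to show that the functor $\mcF_0$ identifies these evaluations. Recall that $\mcC_{\mcK}$ is by construction the quotient of $\mcC'_{\mcK}$ by the ideal of negligible morphisms, i.e.\ a morphism $f\in\Hom_{\mcC'_{\mcK}}(\varepsilon,\varepsilon')$ becomes $0$ in $\mcC_{\mcK}$ exactly when the closed evaluation (via $\tr_{\mcK}$) of $g\circ f$ vanishes for every $g\in\Hom_{\mcC'_{\mcK}}(\varepsilon',\varepsilon)$. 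So to prove the proposition I must (i) show $\mcF_0$ kills every negligible morphism, hence descends to $\mcF\colon\mcC_{\mcK}\lra\KarCa$, and (ii) show $\mcF$ is faithful; it is then automatically full and monoidal, since the quotient $\mcC'_{\mcK}\lra\mcC_{\mcK}$ is full and strictly monoidal and $\mcF_0$ is full and monoidal.

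The key input is the matching of evaluations recorded just before the proposition: a closed $\mcK$-decorated $1$-manifold, regarded as a closed diagram in $\mcC'_{\mcK}$ and evaluated by $\tr_{\mcK}$, yields the same scalar as its image under $\mcF_0$ evaluated by $\alpha$ in $\mcC_{\alpha}$, because $\tr_{\mcK}$ is precisely the restriction of $\alpha$ to closures of elements of $\mcK\subset A(+-)$. Granting this, here is step (i). Let $f$ be negligible and let $h$ be an arbitrary morphism of $\KarCa$ from $\mcF_0(\varepsilon')$ to $\mcF_0(\varepsilon)$. Fullness of $\mcF_0$ (which rests on $\End(X_0^{\pm})\cong\mcK$ and on the orthogonality relations \eqref{eq_ort1}, \eqref{eq_ort2}) gives $h=\mcF_0(g)$ for some $g\in\Hom_{\mcC'_{\mcK}}(\varepsilon',\varepsilon)$; then the closure of $h\circ\mcF_0(f)=\mcF_0(g\circ f)$ is the $\alpha$-evaluation of the closure of $g\circ f$, which equals its $\tr_{\mcK}$-evaluation and hence is $0$. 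Thus $\mcF_0(f)$ is orthogonal, under the closure (trace) pairing, to the whole of $\Hom_{\KarCa}(\mcF_0(\varepsilon'),\mcF_0(\varepsilon))$.

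To conclude $\mcF_0(f)=0$ I invoke nondegeneracy of the closure pairing in $\KarCa$. For $\mcC_{\alpha}$ this is immediate from the universal construction: every object is dualizable (the dual of a sign sequence is its reversal with flipped orientations), so a morphism is identified with $0$ precisely when all its closures evaluate to $0$ under $\alpha$, which says exactly that $\Hom_{\mcC_{\alpha}}(\varepsilon,\varepsilon')\times\Hom_{\mcC_{\alpha}}(\varepsilon',\varepsilon)\lra\kk$, $(u,v)\mapsto\tr(v\circ u)$, is nondegenerate. This passes to the Karoubi envelope: writing $\Hom_{\KarCa}((\varepsilon,e),(\varepsilon',e'))=e'\Hom_{\mcC_{\alpha}}(\varepsilon,\varepsilon')e$, for a nonzero $u=e'ue$ pick $v$ with $\tr(vu)\neq0$; then $eve'\in\Hom_{\KarCa}((\varepsilon',e'),(\varepsilon,e))$ and $\tr\bigl((eve')u\bigr)=\tr(v\,e'ue)=\tr(vu)\neq0$ by cyclicity of $\tr$ and $e^2=e$, $(e')^2=e'$. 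Hence $\mcF_0(f)=0$ and $\mcF$ exists. Faithfulness is the same computation reversed: if a morphism of $\mcC_{\mcK}$ lifts to $f$ with $\mcF_0(f)=0$, then for every $g$ the $\tr_{\mcK}$-closure of $g\circ f$ equals the $\alpha$-closure of $\mcF_0(g)\circ\mcF_0(f)=0$, so $f$ is negligible and the morphism was already $0$ in $\mcC_{\mcK}$.

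The step I expect to be the main obstacle is (i), and specifically the interaction between negligibility and fullness: negligibility of $f$ is a condition tested only against morphisms of $\mcC'_{\mcK}$, whereas a priori $\KarCa$ could carry ``extra'' morphisms between the objects $X_0^{\pm}$ not coming from $\mcK$-decorated diagrams, and then orthogonality against the image of $\mcF_0$ would not force $\mcF_0(f)=0$. Fullness of $\mcF_0$ is exactly what rules this out, so the whole argument hinges on that earlier input — together with the precise identification $\tr_{\mcK}=\alpha|_{\text{closures of }\mcK}$ — being correctly in place; once these are granted, the rest is the routine nondegeneracy bookkeeping sketched above.
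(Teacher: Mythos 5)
Your proposal is correct and takes essentially the same route the paper does: the paper offers no displayed proof for this proposition, merely observing that the trace $\tr_{\mcK}$ is the restriction of the evaluation $\alpha$ to closures of elements of $\mcK\subset A(+-)$ and then saying ``Consequently, we obtain the following statement.'' Your write-up is a careful unpacking of that ``consequently'': matching of evaluations, plus the fact that both the universal-construction quotient defining $\mcC_{\alpha}$ (and hence $\KarCa$) and the gligible quotient defining $\mcC_{\mcK}$ kill precisely the morphisms with all closures vanishing, yields well-definedness, faithfulness, fullness, and monoidality. Your explicit verification of nondegeneracy of the closure pairing in the Karoubi envelope (the $eve'$ computation) and your remark that fullness of $\mcF_0$ is what prevents ``extra'' morphisms between $X_0^{\pm}$ from spoiling the argument are both correct and are the points the paper leaves implicit.
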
 
In particular, functor $\mcF_0$ factorizes as the composition 
\begin{equation}
    \mcC'_{\mcK} \lra \mcC_{\mcK} \stackrel{\mcF}{\lra} \KarCa.
\end{equation}
where the first arrow is the gligible quotient functor. 

Functor $\mcF$ induces a monoidal functor 
\begin{equation}\label{eq_equiv_Kar}
    \Kar(\mcF) \ : \ \Kar(\mcC_{\mcK}) \ \lra \Kar(\mcC_{\alpha}). 
\end{equation}
\begin{prop}
  Functor $\Kar(\mcF)$ is an equivalence of categories. 
\end{prop}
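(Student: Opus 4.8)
The plan is to verify that $\Kar(\mcF)$ is fully faithful and essentially surjective, so that it is an equivalence. Full faithfulness is nearly formal given Proposition~\ref{prop_F0}: $\mcF\colon\mcC_{\mcK}\to\KarCa$ is fully faithful and additive, and for objects $(X,e),(Y,f)$ of $\Kar(\mcC_{\mcK})$ we have $\Hom_{\Kar(\mcC_{\mcK})}((X,e),(Y,f))=f\,\Hom_{\mcC_{\mcK}}(X,Y)\,e$, with $\Kar(\mcF)$ acting on morphisms by applying $\mcF$. Since $\mcF$ carries $\Hom_{\mcC_{\mcK}}(X,Y)$ isomorphically onto $\Hom_{\KarCa}(\mcF X,\mcF Y)$ and carries the idempotents $e,f$ to $\mcF(e),\mcF(f)$, it carries $f\,\Hom_{\mcC_\mcK}(X,Y)\,e$ isomorphically onto $\mcF(f)\,\Hom_{\KarCa}(\mcF X,\mcF Y)\,\mcF(e)$; since $\KarCa$ is already additive and idempotent-complete we may regard $\Kar(\mcF)$ as landing in $\KarCa$, and the above shows it is fully faithful. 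The remaining content is therefore essential surjectivity.

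For essential surjectivity I would first reduce to the two generating objects $+$ and $-$ of $\mcC_\alpha$. Recall that $\mcF$ sends $\oneb$ to $\oneb$, sends $+\in\Ob(\mcC_{\mcK})$ to $X_0^+=(+,e_0^+)$, and sends $-$ to $X_0^-=(-,e_0^-)$; and recall from an earlier proposition that $(+,e_i^+)\cong\oneb\cong(-,e_i^-)$ for $1\le i\le k$. Combined with $+\cong\bigoplus_{i=0}^k(+,e_i^+)$ and the analogous decomposition of $-$, this gives isomorphisms $+\cong X_0^+\oplus\oneb^{\oplus k}$ and $-\cong X_0^-\oplus\oneb^{\oplus k}$ in $\KarCa$. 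Hence $+$ is isomorphic to $\Kar(\mcF)$ applied to $(+)\oplus\oneb^{\oplus k}$ formed inside $\Kar(\mcC_{\mcK})$, and similarly for $-$; so $+$, $-$, and $\oneb$ all lie in the essential image of $\Kar(\mcF)$.

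The last step is to propagate this to all objects of $\KarCa$. The essential image of $\Kar(\mcF)$ is closed under isomorphism and under finite $\oplus$; because $\Kar(\mcF)$ is monoidal it is also closed under $\otimes$; and because $\Kar(\mcF)$ is fully faithful and $\Kar(\mcC_{\mcK})$ is idempotent-complete it is closed under passing to direct summands (transport an idempotent on $\Kar(\mcF)(Z)\cong A$ back to an idempotent on $Z$ via full faithfulness, split it, and apply $\Kar(\mcF)$). Every object of $\mcC_\alpha$ is a sign sequence, hence a tensor product of copies of $+$, $-$, and $\oneb$, so lies in the essential image; and every object of $\KarCa$ is a direct summand of a finite direct sum of sign sequences, so also lies in the essential image. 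Together with full faithfulness this yields the equivalence. I do not expect a real obstacle in this argument: the one substantive input — full faithfulness of $\mcF$ and the identification $+\cong X_0^+\oplus\oneb^{\oplus k}$ (and dually for $-$) — is already in hand from the preceding propositions, and what is left is the routine, if slightly fiddly, verification of these closure properties of the essential image.
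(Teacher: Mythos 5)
Your argument is correct and proceeds along the same lines as the paper's proof: both hinge on the decompositions $+\cong X_0^+\oplus\oneb^{\oplus k}$ and $-\cong X_0^-\oplus\oneb^{\oplus k}$ together with the full faithfulness of $\mcF$ from Proposition~\ref{prop_F0}. You spell out the formal bookkeeping (that $\Kar(\mcF)$ inherits full faithfulness, and the closure of the essential image under $\oplus$, $\otimes$, and passage to summands) which the paper compresses into the phrase ``taking earlier propositions into consideration completes the proof.''
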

\begin{proof}
  Recall the complementary object $X_{>0}^+$ to $X_0^+$ in $+\in \Ob(\Kar(\mcC_{\alpha})$, so that $+\cong X_0^+\oplus X_{>0}^+$. Likewise, $X_{<0}^-$ is complementary to $X_0^-$ with respect to the object $-$ of $\Kar(\mcC_{\alpha})$. Objects $X_{>0}^+$ and $X_{<0}^-$ are both isomorphic to the direct sum of $k$ copies of the object $\oneb$. Consequently,  the category $\Kar(\mcC_{\alpha})$ is equivalent to the Karoubi closure of the monoidal subcategory generated by $X_0^+$ and $X_0^-$. Taking earlier propositions into consideration completes the proof. 
\end{proof}

We now explain the meaning of these results. Starting with a rational evaluation $\alpha=(\alphai,\alphac)$ we formed the decomposition $A(+-)\cong \mcI\times \mcK$, with $\mcI\cong A(+)\otimes A(-)$ and the complementary factor $\mcK$ orthogonal, in a suitable sense, to $A(+)$ and $A(-)$. 

Algebra $\mcK$ is symmetric Frobenius, with the nondegenerate trace given by the closure operation in Figure~\ref{linear-0020}. Note also the trace formula \eqref{eq_tr_pstar} for the trace computed on the projection from $\kk\Sigma^{\ast}\subset A(+-)$ onto $\mcK$ as the difference $\alphac-\alphai^{\tr}$. 

%The difference $\alphac-\alphai^{\tr}$ of the circular evaluation $\alphac$ and the trace evaluation $\alphai^{\tr}$, viewed as functions on $A(+-)$, induces a nondegenerate trace $\tr_{\mcK}$ on $\mcK$ making it into a Frobenius algebra. 

Symmetric Frobenius algebras (Frobenius algebras $B$ with a symmetric trace, $\tr(xy)=\tr(yx), x,y\in B$) are also called just \emph{symmetric} algebras. 
To the symmetric algebra $(\mcK,\tr_{\mcK})$ we assign the Frobenius--Brauer category $\mcC'_{\mcK}$ and its gligible quotient $\mcC_{\mcK}$. Then there is a natural fully-faithful monoidal functor $\mcF:\mcC_{\mcK}\lra \Kar(\mcC_{\alpha})$, 
see Proposition~\ref{prop_F0}, inducing an equivalence \eqref{eq_equiv_Kar} of Karoubi envelopes of $\mcC_{\mcK}$ and $\mcC_{\alpha}$.  

Conceptually, passing to the Karoubi envelope $\Kar(\mcC_{\alpha})$ allows one to reduce the consideration to the Frobenius algebra $\mcK$ together with the trace $\tr_{\mcK}$ on it. Decorated half-arc morphisms go between $\oneb$ and summands of $+$ and $-$ objects which are equivalent to $\oneb$ and can be ignored in the Karoubi envelope. 
In a sense, no genuinely new idempotents appear when considering half-intervals. Beyond the identity object $\oneb$ we only need the objects $X_0^+, X_0^-$ coming from $\mcK$, tensor products of these objects and summands of these tensor products to describe a category equivalent to $\Kar(\mcC_{\alpha})$. 

\vspace{0.1in}

{\it General one-variable case.}  
Let us specialize again to a single variable to generalize examples~\ref{ex_2}--\ref{ex_3}. Consider a general one-variable case $\Sigma=\{a\}$, with rational evaluation functions as in \eqref{eq_generating}, \eqref{eq_encode}. State space $A(+)$ depends on $Z_{\I}(T)$ only, and let 
\begin{equation}
    Z_\I(T) = \frac{P_{\I}(T)}{Q_{\I}(T)}, \ \ \ n_\I=\deg(P_{\I}(T)), \ \ m_\I=\deg(Q_{\I}(T)), 
\end{equation}
for polynomials $P_{\I}(T),Q_{\I}(T)$ with $Q_{\I}(0)\not= 0$. 
Consider the polynomial 
\begin{equation}\label{eq_g_I}
    g_{\I}(T) \ := \ T^{m_\I} Q_{\I}(1/T) \, T^{\max(0,n_\I-m_\I+1)} \ = \ T^{m_\I+\max(0,n_\I-m_\I+1)} Q_{\I}(1/T),
\end{equation}
the product of the reciprocal polynomial of $Q_{\I}(T)$ and a power of $T$. 
Then $g_{\I}(T)$ is the characteristic polynomial for the action of $a$ on $A(+)$, see~\cite{Kh2,KKO} for the equivalent case of the state space of a circle in a two-dimensional topological theory. In particular, 
\begin{equation}
    \dim A(+) \ = \ \dim A(-) \  = \ \deg(g_{\I}(T)) \ = \ m_\I + \max(0,n_\I-m_\I+1). 
\end{equation}
and $A(+)$ is a cyclic $\kk[a]$-module given by 
\begin{equation}
A(+) \ \cong \ \kk[a]/(g_{\I}(a)). 
\end{equation} 

Note that for any rational series 
\begin{equation}
    Z(T) = \frac{P(T)}{Q(T)}, \ \ \ n=\deg(P(T)), \ \ m=\deg(Q(T)), \ \ Q(0)\not= 0, 
\end{equation}
we can form the corresponding polynomial 
\begin{equation}\label{eq_g_Z}
    g_{Z}(T) \ := \ T^{m} Q(1/T) \, T^{\max(0,n-m+1)} \ = \ T^{m+\max(0,n-m+1)} Q(1/T),
\end{equation}
which is the characteristic polynomial for the action of $a$ on the state space $A(+)$ associated to the series $Z(T)$. 

\vspace{0.1in} 

In the direct product decomposition $A(+-)\cong \mcI\times \mcK$ the factor 
$\mcI\cong A(+)\otimes A(-)$ can be understood from this data. To handle $\mcK$, let us start with the potentially larger algebra $U$. These algebras fit into a diagram of surjective homomorphisms 
\begin{equation}\label{eq_many_p} 
     \kk[a] \ \stackrel{p_0}{\lra} \ U \ \stackrel{p}{\lra} \ \mcK, \ \ \  p_{\ast}: \kk[a]\lra \mcK, \ \ p_{\ast}:=p\, p_0,
\end{equation}
where map $p_0$ sends $a^n$ to an arc with $n$ dots (see Figure~\ref{linear-0007} on the right and specialize $\omega=a^n$), and $p$ is the composition of the inclusion of $U$ into $A(+-)$ and projection onto $\mcK$, see \eqref{eq_p}. Finally, let $p_{\ast} := p\circ p_0$, see \eqref{eq_p_sigma}. 

Algebra $U$ is naturally a quotient $U\cong \kk[a]/(g(a))$, for some polynomial $g(a)$. A polynomial $f(a)$ is $0$ in $U$ if and only if it (1) acts trivially on $A(+)$ (equivalently, $f(a)\in (g_{\I}(a)$)) and (2) belongs to the kernel of the bilinear form on $U$ given by $(f_1,f_2)=\tr(f_1f_2)$, with the trace given by closing an arc into into a circle and evaluating it via $\alphac$, see the top right diagram in Figure~\ref{linear-0024}. 

Evaluation $\alphac$ is encoded by a rational series 
\begin{equation}
    Z_\circ(T) = \frac{P_\circ(T)}{Q_\circ(T)}, \ \ \ n_\circ=\deg(P_{\I}(T)), \ \ m_\circ=\deg(Q_{\I}(T)), 
\end{equation}
with $Q_\circ(0)\not= 0$. Form the polynomial (as in \eqref{eq_g_I})
\begin{equation}\label{eq_g_circ} 
    g_{\circ}(T) \ := \ T^{m_\circ} Q_\circ(1/T) \, T^{\max(0,n_\circ-m_\circ+1)} \ = \ T^{m_\circ+\max(0,n_\circ-m_\circ+1)} Q_{\circ}(1/T),
\end{equation}
Then the principal ideal $(g_{\circ}(a))\subset \kk[a]$ is the kernel of the above bilinear form, and $U$ is naturally the quotient of $\kk[a]$ by the intersection of the two principal ideals $(g_\I(a))$ and $(g_\circ(a))$. Let $g_{\alpha}(a) := \mathsf{lcm}(g_\I(a),g_{\circ}(a))$ be the lcm of these two polynomials. We get a canonical isomorphism 
\begin{equation} 
 U \ \cong \  \kk[a]/(g_{\alpha}(a)) \ \cong \ \kk[a]/(\mathsf{lcm}(g_\I(a),g_{\circ}(a))). 
\end{equation} 

\vspace{0.1in}

Coming back to $\mcK$,  recall that it is a quotient of $U$ and $\kk[a]$, see \eqref{eq_many_p} and \eqref{eq_p_sigma}, via quotient maps $p$ and $p\, p_0$, respectively.  

Form the trace series $Z_{\I}^{\tr}:= Z_{\alphai}^{\tr}(T)$ of $Z_{\alphai}(T)$, see formulas \eqref{eq_dec_part}-\eqref{eq_Z_mu}, and the difference of the two series 
\begin{equation}
     Z_{\circ\I}(T) \ := \ Z_{\circ}(T) - Z_{\I}^{\tr}(T).
\end{equation}
Let $g_{\circ\I}(T)$ be the characteristic polynomial for the action of $a$ associated to these series, see \eqref{eq_g_Z}. 

The relation \eqref{eq_tr_pstar} on the trace, reproduced below, 
\begin{equation}\label{eq_tr_pstar_again}
     \tr (p_{\ast}(\omega)) \ = \ \alphac(\omega) - \alphai^{\tr}(\omega),  \ \  \omega\in \Sigma^{\ast}, 
\end{equation}
implies that under the homomorphism $p_{\ast}$ in \eqref{eq_many_p} the trace on $\omega=a^n$, viewed as an element of $\mcK$, is given by the coefficient at $T^n$ of the series $Z_{\circ\I}(T)$ above. Consequently, there is a natural isomorphism 
\begin{equation}
    \mcK \ \cong \ \kk[a]/(g_{\circ\I}(a)), 
\end{equation}
with $g_{\circ\I}(T)$ associated to the series $Z_{\circ\I}(T)$. 
The quotient map $U\lra \mcK$ takes $a\in U$ to $p_{\ast}(a)\in \mcK$. From the isomorphisms 
\begin{equation}
    U\ \cong \ \kk[a]/(\mathsf{lcm}(g_\I(a),g_{\circ}(a))), \ \ \mcK \ \cong \ \kk[a]/(g_{\circ\I}(a))
\end{equation}
that respect the surjection, we obtain an inclusion of ideals $(\mathsf{lcm}(g_\I(a),g_{\circ}(a)))\subset (g_{\circ\I}(a))$, so that the one-variable polynomial $g_{\circ\I}(T)$ is a divisor of $\mathsf{lcm}(g_\I(T),g_{\circ}(T))$.

%%%%%%%%%%%%%%
%
% Two-dimensional
%
%%%%%%%%%%%%%%

\section{From 1D to 2D} 
\label{section_2d}

%%%%%%%%%%%%%%%%
% thin flat 
%%%%%%%%%%%%%%%%

\subsection{Category of thin flat surfaces and symmetric Frobenius algebras}
\label{subsec_thin_flat} 

Factorization $A(+-)\cong \mcI\times \mcK$ allows us to understand, as described above, the category $\Kar(\mcC_{\alpha})$. The term $\mcI$ contributes objects isomorphic to $\oneb$ to the Karoubi envelope, while the symmetric  Frobenius algebra $\mcK$ leads to a $\mcK$-decorated Frobenius--Brauer category and its negligible quotient, which is equivalent to  $\Kar(\mcC_{\alpha})$. 

Thus, from a one-dimensional theory with defects we can extract a key piece of structure, the symmetric Frobenius algebra $(\mcK,\tr_{\mcK})$ which, on one hand, describes $\Kar(\mcC_{\alpha})$, and on the other has a strongly two-dimensional flavor, since it gives a tensor functor on a suitable category of 2-dimensional cobordisms with corners. 

In this section we review the two-dimensional nature of non-necessarily commutative symmetric algebras $(B,\tr_B)$ and their interpretation via tensor functors from the category of \emph{thin flat surfaces with boundary and corners} to $\kkvect$. A version of this correspondence goes back at least to Moore-Segal~\cite{MS06}, also see~\cite{Lauda05,lauda2006open,LP07,LP08,Cap13,KQR}  and unpublished notes~\cite{KQ-notes-20}. Our modest contribution is to point out that elements of $B$ can be placed as labelled dots ($0$-dimensional defects) along the inner edges of cobordisms and to provide a neck-cutting formula in this language. We discuss obstacles in the nonsemisimple case to extending a thin surface TQFT to an open-closed TQFT as in~\cite{MS06,LP07,LP08,Cap13}. We also show how a  combination of a symmetric Frobenius algebra and an evaluation series for closed cobordisms leads to a universal construction which occupies an intermediate role between open-closed two-dimensional TQFT~\cite{MS06,LP07,LP08} and general two-dimensional topological theories for surfaces with corners~\cite{KQR}. 
\vspace{0.1in}

The category $\TCobt$ of \emph{thin flat surfaces} is defined in~\cite{KQR}, where it is denoted $\mathrm{TFS}$. It has objects $n\in \Z_+=\{0,1,2,\dots \}$ represented by $n$ disjoint intervals in $\R$ ordered from left to right. Morphisms from $n$ to $m$ in $\TCobt$ are surfaces $S$ with boundary and corners together with an immersion into the strip $\R\times [0,1]$, see Figure~\ref{A1} for an example. A surface inherits an orientation from that of $\R\times [0,1]$. An immersion can have overlaps, that can be perturbed in $\R^2\times [0,1]$ into an embedding of $S$ into the latter. Two morphisms are equal if the corresponding oriented surfaces are diffeomorphic rel boundary (forgetting the immersion or the embedding). We refer to~\cite{KQR} for details. 

\input{A1}

Category $\TCobt$ is symmetric monoidal, with the monoidal structure given by placing objects and morphisms next to each other, in parallel. Figure~\ref{A2} shows a possible set of generating morphisms of the monoidal category $\TCobt$. 
Figure~\ref{A3} shows some relations in $\TCobt$, and we refer to~\cite{Lauda05,KQR} for a complete set of relations. 

\input{A2}

\input{A3}

\vspace{0.1in}

The following result is well-known, {\it c.f.},~\cite{MS06,LP07,LP08,LP09,KQ-notes-20}.

\begin{prop} \label{prop_symmetric}
Symmetric monoidal functors $F: \TCobt\lra \kkvect$ are classified by symmetric Frobenius $\kk$-algebras $(B,\tr_B)$, assigning multiplication, unit, trace, comultiplication and permutation maps to cobordisms in Figure~\ref{A1}. 
\end{prop}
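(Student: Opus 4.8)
The plan is to invoke the generators-and-relations presentation of $\TCobt$ established in \cite{Lauda05,KQR}. Since $F$ is symmetric monoidal we have $F(n)\cong B^{\otimes n}$ for $B:=F(1)$, and $F$ is determined by the linear maps it assigns to the generating morphisms of Figure~\ref{A2}: the multiplication $m\colon B\otimes B\to B$, unit $\iota\colon\kk\to B$, trace $\tr\colon B\to\kk$, comultiplication $\Delta\colon B\to B\otimes B$, $\mathrm{id}_B$, and the permutation $P$, which $F$ must send to the symmetry of $\kkvect$. Conversely, any such tuple with $F(P)$ the flip extends uniquely to a symmetric monoidal functor on $\TCobt$ precisely when it respects all the defining relations. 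Thus the statement reduces to showing (i) the relations of $\TCobt$ on these generators are exactly the axioms of a symmetric Frobenius algebra (with $\Delta$ then recovered from the rest), and (ii) the assignments $F\mapsto(B,m,\iota,\tr)$ and $(B,\tr_B)\mapsto F$ are mutually inverse.

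For the forward direction I would run through the relations of Figure~\ref{A3} and the full list in \cite{KQR}: gluing two pairs of pants two ways gives associativity of $m$; absorbing a disk gives left and right unitality of $\iota$; the relation that reorders a trace-capped strand gives $\tr(xy)=\tr(yx)$, i.e.\ symmetry of the trace; the zig-zag/Frobenius relations equate $\Delta\circ m$ with $(m\otimes\mathrm{id})(\mathrm{id}\otimes\Delta)$ and with $(\mathrm{id}\otimes m)(\Delta\otimes\mathrm{id})$, which is the Frobenius compatibility and forces $\Delta$ to be the canonical comultiplication dual to $m$ under the bilinear form $(x,y)\mapsto\tr(xy)$; finally the relation that $B\xrightarrow{\Delta}B\otimes B\xrightarrow{\mathrm{id}\otimes\tr}B$ equals $\mathrm{id}_B$ forces that form to be nondegenerate. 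Naturality of $P$ with respect to $m$ and $\Delta$ is automatic in $\kkvect$. Hence $(B,m,\iota,\tr)$ is a symmetric Frobenius algebra and $\Delta$ is determined by it. For the converse, given $(B,\tr_B)$ one defines $F$ on generators by the Frobenius structure maps and on $P$ by the flip; well-definedness is exactly the verification that all defining relations of $\TCobt$ hold, which is the classical computation behind open 2D TQFT (see \cite{MS06,LP07,LP08,LP09}), and the two constructions are visibly inverse.

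The main obstacle, and the only nontrivial ingredient, is the presentation of $\TCobt$ itself: one needs a complete finite list of relations (supplied in \cite{Lauda05,KQR}) so that checking well-definedness of $F$ is a finite task, and one must verify that each listed relation corresponds under $F$ to a consequence of the symmetric Frobenius axioms. In particular, one should confirm that no relation of $\TCobt$ imposes commutativity of $B$ --- this is precisely why \emph{thin flat} surfaces, rather than planar or closed ones, are the correct category --- and that the relations genuinely force nondegeneracy of the trace form rather than merely its existence. Granting the presentation, the remainder is routine diagram chasing in $\kkvect$.
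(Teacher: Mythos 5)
Your proposal is correct and follows the standard argument from the cited literature (\cite{Lauda05,KQR,MS06,LP07,LP08}); the paper does not supply a proof but merely cites these sources, so you have in fact given more detail than the paper itself. You correctly identify all the essential points, including that the symmetry of the trace comes from a specific relation in $\TCobt$ (the paper singles this out as the equality of morphisms in Figure~\ref{A2.X}), that the nondegeneracy of the trace form must be forced by a relation rather than assumed, and that $\TCobt$ imposes no commutativity on $B$, which is exactly why thin flat surfaces, not closed cobordisms, are the right source category.
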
 
Denote the functor associated to $(B,\tr_B)$ by $F_B$. In this notation we suppress the dependence of $F_B$ on the trace map $\tr_B$. Note that the symmetric trace condition comes from the Figure~\ref{A2.X} equality of morphisms in $\TCobt$. 

\input{A2.X}

\vspace{0.1in} 

The more familiar correspondence is that between \emph{commutative} Frobenius algebras and symmetric monoidal functors from the category $\Cob_2$ of oriented surfaces with boundary to $\kk-vect$. In that case object $n$ of $\Cob_2$ is represented by $n$ circles, not intervals, and the Frobenius algebra is commutative. 

In this case of cobordisms between closed 1-manifolds and commutative Frobenius algebras $(A,\tr_A)$ the functor can be enhanced by introducing dots floating on the components of a cobordism and labelled by elements of $A$. Such dotted cobordism can be evaluated to a linear map between tensor powers of $A$, with dot $a$ denoting multiplication by $a$ map $m_a:A\lra A, m_a(x)=ax$. 

\vspace{0.1in} 

A similar enhancement exists for tensor functors $F_B:\TCobt\lra \kkvect$ as above and we could not find it in the literature. We explain it now.  

A dot labelled $a\in B$ on a side boundary denotes the endomorphism of multiplication by $a$ in $B$. The endomorphism is the left multiplication $\ell_a$ by $a$ if the local orientation at the dot is up and right multiplication by $a$ if the local orientation at the dot is down, see Figure~\ref{Y1}. 

\vspace{0.1in} 

\input{Y1}

Likewise, an $a$-labelled dot at the side boundary of a half-disk denotes either the element $a\in B$ or the map $x\mapsto \tr(ax)$ from $B$ to $\kk$, see Figure~\ref{Y1}. 

Dots can move freely along side boundaries and relations in Figure~\ref{Y2} hold. 
Figure~\ref{Y2.1} depicts that dots may slide past local maxima and minima.

\input{Y2}
 
\input{Y2.1}

\input{Y3}

\vspace{0.1in}

Pick a basis $\{x_i\}_{i=1}^n$ of the vector space $B$ and the dual basis $\{y_i\}_{i=1}^n$ of $B$ with respect to the trace form, so that $\tr(x_iy_j)=\delta_{i,j}$. 
Then 
the surgery relation holds, see Figure~\ref{Y3} left, that allows to cut a surface along an interval connecting two side boundary points. In particular, any 
closed surface in $\TCobt$ (a surface with empty horizontal boundary, that is, an endomorphism of object $\emptyset_1$), with side boundary decorated by elements of $B$, can be evaluated to an element of $\kk$. Indeed, each connected component of such surface has nonempty side boundary and the surgery relation can be iteratively applied for an evaluation. 

One can further augment possible decorations by allowing elements $c$ of the center $Z(B)$ to float inside a surface. Such a floating dot denotes the multiplication by $c$ endomorphism of $B$. A floating central element $c$ can land onto a side boundary, see Figure~\ref{Y4}. 

\input{Y4}

\input{Y5}

\vspace{0.1in} 

Recall from~\cite{KQR} that category $\TCobt$ has three commuting endomorphisms $\beta_1,\beta_2,\beta_3$ of object $1$, see Figure~\ref{Y5}. These endomorphisms were denoted $b_1,b_2,b_3$ in~\cite{KQR} and  satisfy the relation $\beta_1\beta_3=\beta_3^2$. Consider the functor $F_B$ from $\TCobt$ to $\kkvect$. It takes these endomorphisms to $\kk$-linear maps $F_B(\beta_1),F_B(\beta_2),F_B(\beta_3)$. 

\begin{prop}\label{prop_bimod_fact}
  $F_B(\beta_1)$ and $F_B(\beta_2)$ are $B$-bimodule endomorphisms $B\lra B$, determined by central elements $b_1:=F_B(\beta_1)(1)$ and $b_2:=F_B(\beta_2)(2)$, $b_1,b_2\in Z(B)$. Map $F_B(\beta_3)$ can be factored as 
  \begin{equation}\label{eq_factor} 
      B \stackrel{q_B}{\lra} [B,B] \stackrel{\beta_B}{\lra} Z(B) \stackrel{\iota_B}{\lra} B
  \end{equation}
  for a unique linear map $\beta_B$, 
  where $q_B$ is the quotient map by the commutator subspace and $\iota_B$ the inclusion of the center into $B$.
\end{prop}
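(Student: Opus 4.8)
The plan is to use that, by Proposition~\ref{prop_symmetric}, $F_B$ is the symmetric monoidal functor attached to $(B,\tr_B)$, sending $1\mapsto B$ and the generators $\iota,\tr,m,\Delta,\id_1,P$ of Figure~\ref{A2} to the corresponding Frobenius structure maps. For each $i$ I would cut the thin flat surface representing $\beta_i$ in Figure~\ref{Y5} into a composite of these generators, apply $F_B$, and read off $F_B(\beta_i)$ as an explicit $\kk$-linear endomorphism of $B$. The boundary-defect calculus of Figures~\ref{Y1}--\ref{Y4} is the bookkeeping tool: a dot labelled $a\in B$ on an incoming (resp.\ outgoing) strand of $\beta_i$ records pre- (resp.\ post-)composition with $\ell_a$ or $r_a$, and the sliding relations of Figures~\ref{Y2.1} and~\ref{Y4} let me transport it through the surface.

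First, for $\beta_1$ and $\beta_2$ I would check that the surface admits a cut in which the two horizontal boundary arcs are not separated by a $\tr$- or $\iota$-cap, so that $\beta_i$ is a composite built only from the bimodule generators $m,\Delta,P,\id_1$ (for $\beta_1$ this is the genus-adding composite $m\circ\Delta$). Since these generators are $B$-bimodule maps, so is $F_B(\beta_i)$; equivalently, defect-sliding gives $F_B(\beta_i)\circ\ell_a=\ell_a\circ F_B(\beta_i)$ and $F_B(\beta_i)\circ r_a=r_a\circ F_B(\beta_i)$ for all $a\in B$. By the standard bijection between $B$-bimodule endomorphisms of $B$ and $Z(B)$ given by $\phi\mapsto\phi(1)$, we conclude $F_B(\beta_i)=\ell_{b_i}=r_{b_i}$ with $b_i:=F_B(\beta_i)(1)\in Z(B)$. (Unwinding the composite gives $b_1=\sum_i x_iy_i$, the Euler element, for any pair of dual bases; $b_2$ is the analogous central element, but no closed formula is needed.)

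Next, for $\beta_3$ the surface contains an essential curve and a $\tr/\iota$ feature that breaks the bimodule symmetry, and the statement reduces to two vanishings. (1)~Inserting a commutator $uv-vu$ on a strand and dragging it once around the essential curve returns it as $vu-uv$ by cyclicity, so $F_B(\beta_3)$ annihilates $[B,B]$ and descends through $q_B\colon B\twoheadrightarrow B/[B,B]$. (2)~Under the nondegenerate trace form, $[B,B]^{\perp}=Z(B)$, since $\tr_B\big((uv-vu)w\big)=\tr_B\big(u(vw-wv)\big)$ shows $w\perp[B,B]$ iff $w\in Z(B)$; running the same cyclic-sliding argument on the outgoing strand of $\beta_3$ yields $\tr_B\big(F_B(\beta_3)(x)\cdot(uv-vu)\big)=0$ for all $u,v,x$, hence $\im F_B(\beta_3)\subseteq Z(B)$, so $F_B(\beta_3)$ factors through $\iota_B\colon Z(B)\hookrightarrow B$. (If $\beta_3$ is self-dual under the up-down flip, (2) also follows formally from (1) by taking trace-form adjoints: $\im F_B(\beta_3)=(\ker F_B(\beta_3))^{\perp}\subseteq[B,B]^{\perp}=Z(B)$.) Combining (1) and (2) produces the linear map $\beta_B\colon B/[B,B]\to Z(B)$ of~\eqref{eq_factor}, unique because $q_B$ is surjective and $\iota_B$ injective; as a consistency check, the relation $\beta_1\beta_3=\beta_3^2$ in $\TCobt$ translates into the expected identity relating $b_1$ and $\beta_B$.

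The main obstacle I anticipate lies in the $\beta_3$ step: reading the surface of Figure~\ref{Y5} precisely, and justifying the ``drag the defect once around the essential curve'' move as an honest equality of morphisms in $\TCobt$ --- a diffeomorphism rel boundary of the underlying oriented surface, independent of the immersion --- with enough care about orientations that $uv$ returns as $vu$ and not in the reversed order; together with pinning the codomain of $\beta_B$ to $Z(B)$ itself rather than merely to $(B/[B,B])^{\ast}$ under the trace duality. Step~1 and the uniqueness of $\beta_B$ are routine once the generator decompositions of $\beta_1,\beta_2,\beta_3$ are in hand.
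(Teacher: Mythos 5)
Your argument follows the paper's: both proofs rest on the dot-sliding (defect-transport) relations along side boundaries illustrated in Figures~\ref{Y6} and~\ref{Y7}. For $\beta_1,\beta_2$ your two stated routes (decomposition into the bimodule generators $m,\Delta,P,\id$, or direct dot-sliding between matching bottom and top corners) both work and the second is exactly the paper's. The one genuine variation is your step~(2) for $\beta_3$: you deduce $\im F_B(\beta_3)\subseteq Z(B)$ from $[B,B]^{\perp}=Z(B)$ under the trace form together with self-duality of $\beta_3$, whereas the paper simply slides a dot $a$ along the top side boundary from the top-left to the top-right corner to get $aF_B(\beta_3)(b)=F_B(\beta_3)(b)a$ directly; both are correct, but the direct slide is more elementary and does not invoke nondegeneracy of $\tr_B$ at this point. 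One small caution on your step~(1): the phrasing ``inserting a commutator $uv-vu$ and dragging it once around returns it as $vu-uv$'' is looser than needed and, read literally, would only give $2F_B(\beta_3)|_{[B,B]}=0$. The clean move, as in the paper, is to slide a \emph{single} dot $a$ from near the bottom-left corner to near the bottom-right corner along the side boundary of $\beta_3$, yielding $F_B(\beta_3)(ab)=F_B(\beta_3)(ba)$ for all $a,b$, which kills $[B,B]$ with no sign bookkeeping. Your identification of $b_1=\sum_i x_i y_i$ (the Euler element) via $\beta_1=m\circ\Delta$ is a correct extra observation the paper does not state.
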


\begin{proof}
Maps $F_B(\beta_1)$ and $F_B(\beta_2)$ are $B$-bimodule maps, since $a$-dots can be slid along the side edges of these cobordisms from near a corner at the bottom to the matching corner point at the top, see Figure~\ref{Y6}.

\input{Y6}

\input{Y7}

\vspace{0.1in}

For the map $F_B(\beta_3)$, a dot $a$ near the bottom left corner can be moved to the dot $a$ near the bottom right corner, by dragging it along a side boundary, see Figure~\ref{Y7} on the left. Consequently, $F_B(\beta_3)(ab)=F_B(\beta_3)(ba)$ for any $a,b\in B$, so that $F_B(\beta_3)$ factors through map $q_B$. 

Likewise, $a$-dot at the top left corner of $\beta_3$ can be moved to the top right corner of $\beta_3$, by dragging it along a side boundary, see Figure~\ref{Y7} on the right. This means 
$a F_B(\beta_3)(b)=F_B(\beta_3)(b)a$ for any $a,b\in B$, that is, 
$F_B(\beta_3)(b)$ belongs to the center $Z(B)$ of $B$. Hence, $F_B(\beta_3)$ admits a factorization \eqref{eq_factor}. 
\end{proof}

The map 
\begin{equation}\label{eq_map_B}
    \beta_B \ : \ B/[B,B]\ \lra \ Z(B)
\end{equation}
is associated to a Frobenius algebra $B$ with a symmetric trace. 

\begin{example}\label{ex_map_0} 
  Consider the group algebra $B=\kk[C_p]$  of a prime order cyclic group $C_p=\{1,g|\, g^p=1\}$ over a field $\kk$ of characteristic $p$ with the trace
  \begin{equation}
  \tr_B:B\lra \kk, \  \ \tr_B(1)=1, \ \ \tr_B(g^i)=0, \ 0<i<p.
  \end{equation}
  The comultiplication structure map of this thin surface TQFT is 
  \begin{equation}
      \Delta(g^i)=\sum_{i=0}^{p-1} g^j\otimes g^{i-j}. 
  \end{equation}
  The map $F_B(\beta_3)$ is the composition 
  \begin{equation}
       (\tr_B\otimes 1)(m\otimes 1)(1\otimes P) (1\otimes \Delta)\Delta,
  \end{equation}
  see Figure~\ref{Y8}. 
Computing this map for the group algebra $B$ yields $F_B(\beta_3)=0$ and $\beta_B=0$.

\input{Y8}
\end{example}

Choosing a finite set $\Sigma$ of labels for the side boundary dots, one obtains a decorated version $\TCob_{2,\Sigma}$ of the thin flat surface category, where side boundaries can carry dots labelled by elements of $\Sigma$ as in Figures~\ref{Y1},~\ref{Y2.1}, for instance. A symmetric Frobenius algebra $(B,\tr_B)$ as above together with a map of sets $\Sigma\lra B$ determines a symmetric monoidal functor $\TCob_{2,\Sigma}\lra \kkvect$. 

\begin{example} \label{eq_continue_ex} For Frobenius $B$ in Example~\ref{ex_map_0} and $\Sigma=\{g\}$, the relations for the resulting functor are shown in Figure~\ref{L1}, where a dot labelled $i\in \Z/p$ denotes multiplication by $g^i$ at that position (a dot labelled $0$ can be erased). 

\vspace{0.1in} 

\input{L1}

\end{example}

%%%%%%%%%%%%%%%%%
% open-closed 
%%%%%%%%%%%%%%%%%

\subsection{Open-closed TQFTs and topological theories} 
\label{subsec_open_closed} 

\quad 

{\it Open-closed 2D TQFTs and knowledgeable Frobenius algebras.}
The usual category of oriented cobordisms between closed one-manifolds and the category of thin flat surfaces $\TCobt$ can be unified into the category of open-closed 2D cobordisms~\cite{MS06,LP08,Laz01,lauda2006open}. In that category $\OCCobt$ objects are compact oriented 1-manifolds with boundary, thus finite unions of intervals and circles. Morphisms are diffeomorphism classes (rel boundary) of oriented surfaces with boundary and corners. These surfaces have \emph{side boundary} and corner points where side boundaries meet top and bottom boundary intervals. We refer to~\cite{LP08} for details. As a set of generating morphisms one can take the union of 
\begin{itemize}
\item standard generating morphisms for thin surfaces in Figure~\ref{A2} that correspond to the structure maps of a symmetric Frobenius algebra, and the permutation map $P$ of two intervals, 
\item standard generating morphisms for the usual category of two-dimensional oriented cobordisms. These morphisms correspond to the structure maps in a commutative Frobenius algebra and the transposition of two circles, 
\item Zipper and cozipper cobordisms, see Figure~\ref{open-closed}, and the transposition of an interval and an circle, similar to the transposition $P$ of two intervals in Figure~\ref{A2}. 
\end{itemize}

\input{open-closed}

A 2D TQFT for the open-closed category consists of a symmetric Frobenius algebra $(B,\tr_B)$, which is the state space of an interval, and a commutative Frobenius algebra $(C,\tr_C)$, describing the state space of a circle. These two Frobenius algebras are subject to the following interactions: 
\begin{enumerate}
    \item There exists a trace-respecting coalgebra homomorphism $\jmath: B\lra C$ (the \emph{zipper homomorphism}),  see Figure~\ref{open-closed} left. Its dual is 
 the \emph{cozipper} map $\jmath^{\ast}:C\lra B$, see Figure~\ref{open-closed} right. That $\jmath$ is a coalgebra 
 homomorphism comes from diffeomorphisms (rel boundary) in the 
 top row of Figure~\ref{zipper-001}; $\jmath$ intertwines comultiplication and the counit maps of the two coalgebras.  
 The dual (cozipper) map intertwines the two multiplications and takes the unit element of $C$ to the unit element of $B$, thus it is an algebra homomorphism. These properties of $\jmath^{\ast}$ correspond to the 
 relations given by rotating those in the top row of Figure~\ref{zipper-001} by $180^{\circ}$.
    \item Maps $\jmath,\jmath^{\ast}$ are subject to
    \begin{itemize}
        \item The 
    \emph{knowledge} relation, shown on the left in the second row in~\ref{zipper-001},
    \item The duality between the zipper and the cozipper, shown 
    on the right in the second row of~\ref{zipper-001}, 
    \item 
    The Cardy condition, see the third row in~\ref{zipper-001}.
    \end{itemize}
\end{enumerate}

\begin{remark}
Lauda and Pfeiffer~\cite{LP08,LP09} view a cobordism as a morphism from the top boundary to the bottom boundary, while our convention is the opposite. They also denote a symmetric Frobenius algebra by $A$ instead of our $B$, and zipper morphism by $\iota$ instead of $\jmath$, so that, for instance, the zipper morphism is $C\stackrel{\iota}{\lra}A$ in~\cite{LP08,LP09} and $B\stackrel{\jmath}{\lra}C$ in the present paper. 
\end{remark}

\input{zipper-001}

%\vspace{0.1in} 

Such pairs $(B,C)$ are called \emph{knowledgeable Frobenius algebras}, see~\cite[Definition 2.2]{LP09}. Examples of many such pairs in the literature are build from a \emph{strongly separable} symmetric Frobenius algebra $B$. An algebra is called strongly separable if 
the trace form $(a,b)_{\ell}=\tr_A(L_a\circ L_b)$ is non-degenerate, where $L_a$ is the operator $A\lra A$ of left multiplication by $a$. 

A strongly separable symmetric Frobenius algebra $B$ extends to a knowledgeable Frobenius algebra $(B,C)$ by taking $C=Z(A)$ to be the center of $A$, see~\cite{LP07,LP09}.   
A strongly separable algebra is necessarily semisimple. 

\begin{example}
Consider the thin surface TQFT and the Frobenius algebra $B$ in Examples~\ref{ex_map_0},~\ref{eq_continue_ex}. 
One can look to extend this open TQFT $F$ to an open-closed TQFT (to a knowledgeable Frobenius algebra $(B,C)$). Then the  commutative Frobenius algebra $C$ associated to a circle should have a  distinguished element $x\in C$, $x=\jmath(1_B)$ shown in Figure~\ref{L2} in the top row and given by a cup with a hole in it. It is the image of the identity element $1_B$ of $B$ under the zipper map $\jmath$. The trace $\tr_C(x)=1$ since a dotless disk evaluates to $1$, so that $x\not= 0\in C$. On the other hand, $x^2=0$ since the corresponding cobordism contains a subsurface shown in the middle of the second row in Figure~\ref{L1}, which evaluates to $0$. Thus, $C$ contains a subalgebra $\kk[x]/(x^2)$. 

\vspace{0.1in} 

\input{L2}

\vspace{0.1in} 

More generally, $C$ contains elements $x_i=\jmath(g^i)$, $i\in \Z/p$, with $x=x_0$, see Figure~\ref{L2} bottom row. We have 
\begin{equation}\label{eq_xs} 
    x_i \, x_j = 0, \  i,j\in\Z/p, \ \  \  \Delta_C(x_i)=\sum_{k=0}^{p-1} x_{i+k}\otimes x_{p-k}, \ \ \tr_C(x_i) = \delta_{i,0}, \ \  \jmath^{\ast}(x_i)=0, \ i\in \Z/p. 
\end{equation}
These elements (or their linear combinations) may potentially be $0$ for $i\not=0$. The trace on $C$ must be nondegenerate, pairwise products of $x_i$'s are $0$ and $\tr_C(x_i)=0$ for $i\not=0$. To avoid introducing more generators for $C$, we further assume that $x_i=0$ for $i\not=0$ and look to complete $B$ to a knowledgeable Frobenius algebra $(B,C)$ with $C\cong \kk[x]/(x^2)$.  Then the zipper and the cozipper maps are 
\begin{eqnarray*}
    \jmath: B\lra C,  & & \jmath(g^i)=0, \ i\not= 0\in \Z/p, \ \ \jmath(1_B)= x, \ \ \mathsf{im}(\jmath)=\kk \, x, \\ 
    \jmath^{\ast}: C \lra B, & & \jmath^{\ast}(1_C)= 1_B, \ \ \jmath^{\ast}(x) = 0, \ \ \mathsf{im}(\jmath^{\ast}) = \kk \, 1. 
\end{eqnarray*}
with $\jmath^{\ast}\jmath=0$, map $\jmath$ a coalgebra homomorphism and $\jmath^{\ast}$ an algebra homomorphism. Each individual map $\jmath,\jmath^{\ast}$ is nonzero but has a one-dimensional image. 
To define the trace on $C$, pick a parameter $\lambda\in\kk$: 
\begin{equation}
    \tr_C(1) \ = \ \lambda, \ \ \ \tr_C(x) = 1. 
\end{equation}
The multiplication ($x^2=0$) and trace on $C$ determine the comultiplication 
\begin{equation}
    \Delta_C(1) =  1\otimes x + x\otimes 1 -\lambda x\otimes x, \ \
    \Delta_C(x) =  x \otimes x. 
\end{equation}
The remaining open-closed TQFT relations, as shown in Figure~\ref{zipper-001}, are straightforward to check. 

In this knowledgeable Frobenius algebra, both $B$ and $C$ are nonsemisimple algebras. The handle (or punctured torus) element of $C$ equals $m_C\circ \Delta_C(1)=2x$, and a closed surface of genus $g$ evaluates to $\alpha_{0,g}$, where  
\begin{equation}
    \alpha_{0,0}=\lambda, \ \ \  \alpha_{0,1}= 2, \ \ \ \alpha_{0,g}=0 \ \mathrm{for} \ g>1 
\end{equation}
(recall that the coefficients belong to a field $\kk$ of characteristic $p$). 
Evaluation $\alpha_{m,g}$ of a connected surface of genus $g$ with $m$ side boundary circles is 
\begin{equation}
    \alpha_{1,0} = 1, \ \ \ \alpha_{m,g}=0 \ \mathrm{if} \ m>0 \ \mathrm{and} \ g\not=0.
\end{equation}
Thus, in this evaluation, at most three coefficients: $\alpha_{0,0},\alpha_{0,1},\alpha_{1,0}$ are nonzero. 

More generally, $\jmath^{\ast}\jmath=0$ if and only if $\alpha_{m,g}=0$ for all $m\ge 1,g\ge 1$ and $m\ge 2,g\ge 0$.

Example 3.7 in~\cite{LP09} is somewhat similar to the present example, with the algebra $B$ isomorphic to the one above (and $\mathsf{char}(\kk)=p$), algebra $C=\kk[x]/(x^2-ht-t)$ two-dimensional and nonsemisimple when parameters $h,t$ satisfy $h^2=4t$, but with  different zipper and trace maps. In particular, $\jmath^{\ast}\jmath\not=0$ in that example ($\iota\,\iota^{\ast}\not=0$ in the notations of~\cite{LP09}). 
\end{example}

In the above example of a knowledgeable Frobenius pair $(B,C)$ both $B$ and $C$ are nonsemisimple and the maps $\beta_B$ and $F_B(\beta_3)$, see \eqref{eq_factor} and \eqref{eq_map_B}, are zero, so that $\jmath^{\ast}\jmath=0$. 
    
\vspace{0.1in}   
  
Frobenius algebras that appear in link homology and categorification are typically nonsemisimple, which creates an obstacle to merging link homology with open-closed 2D TQFTs, where a vast majority of examples is built from semisimple Frobenius algebras. This obstruction is  discussed in~\cite{LP09} and~\cite{Cap13}. One well-known way out of this is a functorial extension of link homology to tangles~\cite{Kho-functor-tangles-02} and then to tangle cobordisms~\cite{Kho-tangle-06,BN05}.

This discrepancy between semisimple Frobenius algebras common in open-closed TQFTs in dimension two and rather special nonsemisimple Frobenius algebras that give rise to link cobordism TQFTs in dimension four and categorification of quantum invariants is an interesting phenomenon that is not fully understood.  

\vspace{0.1in} 

{\it Combining a symmetric Frobenius algebra with the universal construction for closed surfaces.}
Even when symmetric Frobenius $B$ does not extend to a knowledgeable Frobenius $(B,C)$ it is possible to extend $B$ to a functor from $\OCCobt$ to the category of vector spaces but with a weaker axioms than that of a TQFT. This can be achieved by combining the Frobenius structure $(B,\tr_B)$ with the universal construction. 

Symmetric Frobenius algebra $(B,\tr_B)$ gives a thin flat surface TQFT. In particular, it evaluates any connected oriented surface $S_{n,g}$ with $n\ge 1$ boundary components and $g$ handles to a number $\alpha_{B,n,g}\in \kk$. This number can be computed by viewing the surface as an endomorphism of the identity object $0$ of $\TCobt$ and computing the element of $\kk$ it goes to under the functor $F_B$. Alternatively, one can use the surgery relation in Figure~\ref{Y3} and other relations in Figures~\ref{Y3},~\ref{Y2}. 

Doing the universal construction in the modification of $\TCobt$ where side boundaries are decorated by generators of $B$ results in the TQFT $F_B$. 

To extend to all oriented surfaces, we choose an evaluation $\alpha_{0,g}\in \kk$ of a closed oriented surface  of genus $g$ for all $g\ge 0$. It is convenient 
to require that the generating function 
\begin{equation}\label{eq_Z_0}
    Z_0(T)\ :=\ \sum_{g\ge 0} \alpha_{0,g} T^g
\end{equation}
is rational. With this additional choice evaluations of all surfaces $S_{n,g}$, $n,g\ge 0$ of genus $g$ with $n$ boundary circles are defined.

The universal construction can be applied to the category $\OCCobt$ of open-closed cobordisms. To get a better match with state spaces build from $(B,\tr_B)$ for cobordisms that have corners, it is convenient to pick a set $W$ of generators of $B$ and allow these generators to float on side boundaries of cobordisms. 
One obtains a minor modification, denoted $\OCCobt(W)$ of the category $\OCCobt$. Endomorphisms of the $0$ object of $\OCCobt$ that come from surfaces with decorated side boundaries are then evaluated via $(B,\tr_B)$ while evaluations of closed surfaces are encoded in the generating function \eqref{eq_Z_0}. 

Let us do the universal construction for $\OCCobt(W)$ evaluating surfaces with side boundary their possible $W$-decorations via $(B,\tr_B)$ and closed surfaces via coefficients of \eqref{eq_Z_0}. The resulting category and a functor is an extension of the thin surface TQFT associated with $(B,\tr_B)$. Objects of $\OCCobt(W)$ are finite unions $I^{\sqcup k}\sqcup (\S^1)^{\sqcup m}$ of intervals and circles. Denote the state space of that one-manifold by $A(k,m)$. Then the surgery formula still applies near each  interval component, and the state space simplifies via the isomorphism 
\begin{equation}
     A(k,m) \ \cong \ B^{\otimes k} \otimes A(0,m). 
\end{equation}
That is, the state space is isomorphic to the tensor product of $B$'s, one for each interval, and the state space of $m$ circles. The latter state space contains a subquotient isomorphic to the state space of $m$ circles in the closed 2D topological theory with the generating function \eqref{eq_Z_0}, as studied in~\cite{Kh2,KS3,KKO}. The state space $A(0,m)$ may be strictly bigger than the latter state space, due to the presence of surfaces that bound $m$ circles at the top but have side circles (such surfaces can be viewed as morphisms in $\OCCobt(W)$ from the identity object $0$ to $m$ circles). 

\vspace{0.1in} 

This universal construction based on $(B,\tr_B)$ and rational power series \eqref{eq_Z_0} occupies an intermediate position between open-closed TQFTs and the more general universal construction for surfaces with boundary and corners studied in~\cite{KQR}. In the present case $(B,\tr_B)$ allows to evaluate surfaces of all genera with at least one side boundary circle and produce a TQFT (as long as we add additional observables on the boundary lines for generators of $B$) for these surfaces, then extend to cobordisms that may have top and bottom boundary circles and closed surfaces via \eqref{eq_Z_0}. Without enlarging $\OCCobt$ to $\OCCobt(W)$ the resulting state space of the union of $k$ intervals could be only a subspace of $B^{\otimes k}$.  

\begin{remark} Going back to the category $\TCobt$ of thin flat surfaces, one can further introduce one-dimensional interval defects that connect two points on the boundary of a surface. These defects are labelled by endomorphisms of $B$ (by $\kk$-linear maps $B\to B$), see Figure~\ref{Z1}.

\input{Z1}

\end{remark}

\begin{remark}
Categorifications of the Heisenberg algebra come from the study of natural transformations on compositions of the  induction and restriction functors between symmetric groups or Hecke algebras~\cite{Kho14,LS13}.
More general categorifications of the Heisenberg algebra~\cite{BS22,Sav19,RS17,BSW21}  
add elements of a Frobenius algebra as decorations on strands of diagrams in those graphical calculi. The neck-cutting formula  in Figure~\ref{Y3} on the left is called \emph{the Frobenius skein relation} in that case and is referred to as \emph{teleportation} in \cite{BS22}.

A symmetric Frobenius algebra gives a TQFT for thin flat surfaces (two-dimensional objects), which is one of the indications that various Heisenberg algebra categorifications should admit reformulations via a suitable graphical calculus of foam-like objects in $\R^3$ rather than graphs (or intersecting decorated lines) in $\R^2$. 
\end{remark} 

%%%%%%%%%%%%%%
%
% Embeddings 
%
%%%%%%%%%%%%%%

\section{Embeddings into a 1D TQFT and dimensional lifting} 

{\it Embeddings into a 1D TQFT: semisimplicity restriction.}
It is natural to ask under what conditions on $\mcK$ and the trace $\tr_{\mcK}$ is the corresponding one-dimensional theory of arcs with $\mcK$-defects and the circle evaluation given by $\tr_{\mcK}$ embeddable into a one-dimensional TQFT. Oriented 1D TQFTs are described by finite-dimensional vector spaces $V$, with the state space of $+-$ oriented 0-manifold isomorphic to $V\otimes V^{\ast}$. When viewed as an algebra under the composition in Figure~\ref{linear-0023} it is naturally isomorphic to the endomorphism or the matrix algebra $\End_{\kk}(V)\cong M_n(\kk)$. 

Suppose given a symmetric Frobenius algebra $(B,\tr_B)$. It gives rise to the category of arcs with $B$-defects and circles evaluated via $\tr_B$ and the negligible quotient of that category. A monoidal functor from either of these two categories into a 1D TQFT given by $V$ is described by 
 a homomorphism of algebras $\phi: B\lra M_n(\kk)$ that converts trace $\tr_B$ to the usual trace on the matrix algebra $M_n(\kk)$, that is 
$\tr(\phi(a)) = \tr_B(a), \forall a\in B$. That is, $\phi$ must intertwine the two traces. Then $\phi$ is necessarily an inclusion and $\tr_B(1)=n=\dim(V)$. 

Consider the Jacobson radical $J\subset B$. Then $J$ is a two-sided nilpotent ideal, $J^n=0$, and $B/J$ is semisimple. Any element $x\in J$ is nilpotent and $\phi(x)$ is a nilpotent matrix, so that $\tr(\phi(x))=0$. Consequently, $\tr_B(x)=0$, for all elements $x$ in the Jacobson radical. Nondegeneracy of $\tr_B$ implies that $J=0$, so that $B$ is semisimple, and we obtain the following result. 

\begin{prop}
A one-dimensional topological theory with $B$-labelled defects associated to $(B,\tr_B)$ can be embedded into a one-dimensional TQFT over $\kkvect$ only if $B$ is semisimple. 
\end{prop}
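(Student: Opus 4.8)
The plan is to reduce the statement to a fact about the Jacobson radical. First I would record what an embedding into a 1D TQFT amounts to algebraically. An oriented one-dimensional TQFT over $\kkvect$ is determined by a finite-dimensional vector space $V$: the object $+$ is sent to $V$, the object $-$ to $V^{\ast}$, and under the algebra structure on $\End(+-)$ coming from Figure~\ref{linear-0023} one has $\End(+-)\cong\End_{\kk}(V)\cong M_n(\kk)$ with $n=\dim_{\kk}V$ and distinguished trace the ordinary matrix trace. A monoidal functor from the category of arcs with $B$-defects, or from its negligible quotient, into this TQFT is then the same as a unital $\kk$-algebra homomorphism $\phi\colon B\lra M_n(\kk)$ for which the two evaluations of closed decorated $1$-manifolds agree; since a circle carrying $a\in B$ is evaluated by $\tr_B(a)$ on the source side and by the matrix trace of $\phi(a)$ on the target side, this condition is precisely $\tr(\phi(a))=\tr_B(a)$ for all $a\in B$.

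The second step is to observe that any such $\phi$ is automatically injective. If $\phi(a)=0$, then for every $b\in B$ we get $\tr_B(ab)=\tr(\phi(a)\phi(b))=0$, and nondegeneracy of the Frobenius trace $\tr_B$ forces $a=0$; in particular $\dim_{\kk}V=\tr(\phi(1))=\tr_B(1)$. So the mere existence of an embedding already produces a trace-preserving algebra inclusion $B\hookrightarrow M_n(\kk)$.

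The main step is the radical argument. Let $J\subset B$ be the Jacobson radical; since $B$ is finite-dimensional, $J$ is a nilpotent two-sided ideal. For $x\in J$ the matrix $\phi(x)$ is nilpotent, hence traceless, so $\tr_B(x)=\tr(\phi(x))=0$. Because $J$ is a two-sided ideal, the same holds for every product $xb$ with $b\in B$, namely $\tr_B(xb)=0$; nondegeneracy of $\tr_B$ then yields $x=0$. Thus $J=0$, and a finite-dimensional algebra with trivial Jacobson radical is semisimple, which is the claimed necessary condition.

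I expect the only real obstacle to be the bookkeeping in the first step: one must check that ``embeddable into a 1D TQFT'' genuinely unpacks to a trace-intertwining algebra homomorphism $\phi$ and nothing more — that monoidality pins down $\phi$ on the defect labels and that matching the circle evaluation is the sole remaining constraint. Once this dictionary is in place, the remaining ingredients are the standard facts that nilpotent matrices are traceless and that a Frobenius trace is nondegenerate. Note finally that the statement is only an implication, so there is no need to construct such a $\phi$ in the semisimple case.
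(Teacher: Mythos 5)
Your proof follows the paper's argument exactly: identify the putative embedding with a trace-intertwining algebra homomorphism $\phi\colon B\to M_n(\kk)$, note that elements of the Jacobson radical are nilpotent and hence traceless under $\phi$, and conclude $J=0$ by nondegeneracy of $\tr_B$. Your spelled-out observation that $J$ being a two-sided ideal lets you pair $x\in J$ against arbitrary $b\in B$ before invoking nondegeneracy is a small welcome expansion of a step the paper leaves implicit, but it is not a different route.
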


We see that $\tr_B$ can come from a trace on a matrix algebra in the above way only if $B$ is a semisimple $\kk$-algebra, while in applications (for instance, to link homology) we most often encounter cases when $B$ is not semisimple. 

Furthermore, assuming that $B$ is semisimple, only few traces on $B$ correspond to embeddings into one-dimensional TQFTs. 
Namely, $B\cong \prod_{i=1}^k \Mat_{n_i}(D_i)$ is then isomorphic to the product of matrix algebras over finite-dimensional division
rings $D_i$ over $F$ and representation $V$ has the form $V\cong \oplus_{i=1}^k V_i$, where 
\begin{equation}
V_i \cong  (D_i^{n_i})^{r_i}
\end{equation}
is the sum of $r_i$ copies of the column representation $D_i^{n_i}$ of the matrix algebra $\Mat_{n_i}(D_i)$. Under this isomorphism, 
\begin{equation}
    \tr_B  \ = \ \sum_{i=1}^k \, r_i \, \tr_i , 
\end{equation}
where $\tr_i$ is the trace on $\Mat_{n_i}(D_i)$ with values in $\kk$ which is the composition of the matrix algebra trace and the map $\tr'_i: D_i \lra \kk$ which is the trace of left multiplication in $D_i$ viewed as a $\kk$-vector space. 

For example, suppose that the division ring $D_i$ is commutative, thus it is a field $F$ such that $\kk\subset F$ is a finite extension and that $n_i=1$. Any non-zero $\kk$-linear map $\varepsilon: F\lra \kk$ turns $F$ into a commutative Frobenius $\kk$-algebra, but only the trace map $\tr_{\kk}:F\lra\kk$ and its multiples 
$r\,\tr_{\kk}$, $r\in \N$ (further assuming that $F/\kk$ is separable) come from embeddings into a 1D TQFT. 

\vspace{0.1in} 

{\it Dimensional liftings.} 
We see that trace-preserving embeddings of symmetric Frobenius algebras into matrix algebras are scarce. 
At the same time, a symmetric Frobenius algebra $(B,\tr_B)$ gives rise to a 2D TQFT for thin surfaces as explained earlier. A one-dimensional TQFT with defects $\alpha$ produces a two-dimensional TQFT, restricted to thin surfaces, via the symmetric Frobenius algebra $(\mcK,\tr_{\mcK})$. This dimensional lifting from one to two dimension can be very loosely compared to the Drinfeld center of a monoidal category (and the Drinfeld double of a Hopf algebra). Monoidal categories are naturally two-dimensional structures, with morphisms often represented by planar diagram. The Drinfeld center of a monoidal category is a braided monoidal category, providing invariants of braids and lifting the structure one dimension up, from two to three dimensions. Likewise, the Drinfeld double of a Hopf algebra converts a two-dimensional structure (the category of representations of a Hopf algebra is monoidal) to a three-dimensional structure (a quasitriangular Hopf algebra, with the category of representations being a braided monoidal category). 

The graphical nature of a monoidal category $\mcC$ is that of planar networks of morphisms between tensor products of objects of $\mcC$. Such planar networks can be thought of as defects in the two-dimensional theory of the underlying plane $\R^2$. Drinfeld's center and doubling constructions lift these ``two-dimensional theories with defects'' to three-dimensional theories. 

Of course, the above discussion and comparison of dimensional liftings is highly informal. 

%%%%%%%%%%%%%%%%
% BIBLIOGRAPHY 
%%%%%%%%%%%%%%%%

\bibliographystyle{amsalpha}

\bibliography{top-automata}

\end{document}